\newtheorem{thm}{Theorem}
\newtheorem{cor}{Corollary}
\newtheorem{lem}{Lemma}
\theoremstyle{definition}
\theoremstyle{remark}
\numberwithin{equation}{section}
\begin{document}

\title[A New Super Congruence Involving Multiple Harmonic Sums]
{A New Super Congruence Involving \\ Multiple Harmonic Sums}
\author{ LIUQUAN WANG }

\address{Department of Mathematics, National University of Singapore, Singapore, 119076, Singapore}
\email{wangliuquan@nus.edu.sg; mathlqwang@163.com}

\subjclass[2010]{Primary 11A07, 11A41.}

\keywords{super congruences, Bernoulli numbers, harmonic sums}

\date{June 1 , 2014.}

\dedicatory{}


\begin{abstract}
Let ${\mathcal{P}_{n}}$ denote the set of positive integers which are prime to  $n$. Let $B_{n}$ be the $n$-th Bernoulli number. For any prime $p >5$ and integer $r\ge 2$, we prove that
\begin{displaymath}
\sum\limits_{\begin{smallmatrix}
 {{l}_{1}}+{{l}_{2}}+\cdots +{{l}_{5}}={{p}^{r}} \\
 {{l}_{1}},\cdots ,{{l}_{5}}\in {\mathcal{P}_{p}}
\end{smallmatrix}}{\frac{1}{{{l}_{1}}{{l}_{2}}{{l}_{3}}{{l}_{4}}{{l}_{5}}}}\equiv -\frac{5!}{6}{{p}^{r-1}}{{B}_{p-5}} \pmod{{{p}^{r}}}.
\end{displaymath}
This gives an extension of a family of super congruences found by Wang, Cai and Zhao.
\end{abstract}

\maketitle

\section{Introduction}

Zhao \cite{Zhao1} first discovered a curious congruence
\begin{equation}\label{zhao1}
\sum\limits_{\begin{smallmatrix}
 i+j+k=p \\
 i,j,k>0
\end{smallmatrix}}^{{}}{\frac{1}{ijk}\equiv -2{{B}_{p-3}} \pmod {p}.}
\end{equation}
Here $p\ge 3$ is a prime and $B_{n}$ is the $n$-th Bernoulli number, which is defined by
\[\frac{x}{{{e}^{x}}-1}=\sum\limits_{n=0}^{\infty }{\frac{{{B}_{n}}}{n}{{x}^{n}}}.\]

Zhou and Cai \cite{zhouxia} generalized this congruence to the case of arbitrary number of variables. They showed that for any prime $p \ge 5$ and positive integer $n \le p-2$,
\begin{equation}\label{zhou}
\sum\limits_{\begin{smallmatrix}
 {{l}_{1}}+{{l}_{2}}+\cdots +{{l}_{n}}=p, \\
      {{l}_{1}},{{l}_{2}},\cdots ,{{l}_{n}}>0
\end{smallmatrix}}{\frac{1}{{{l}_{1}}{{l}_{2}}\cdots {{l}_{n}}}}\equiv \left\{ \begin{array}{ll}
   -(n-1)!{{B}_{p-n}} \pmod{p}, & \textrm{if $2 \nmid n$;} \\
 -\frac{n}{2(n+1)}n!{{B}_{p-n-1}}p  \pmod{{{p}^{2}}}, & \textrm{if $2 | n$.}
\end{array} \right.
\end{equation}
In another direction, Wang and Cai \cite{Wang} gave a new generalization of (\ref{zhao1}) by replacing prime $p$ to any prime power.
Let ${\mathcal{P}_{n}}$ denote the set of positive integers which are prime to  $n$. They proved that for any prime $p \ge 3$,
\begin{equation}\label{Wang}
\sum\limits_{\begin{smallmatrix}
 i+j+k={{p}^{r }} \\
 i,j,k\in {\mathcal{P}_{p}}
\end{smallmatrix}}^{{}}{\frac{1}{ijk}\equiv -2{{p}^{r-1}}{{B}_{p-3}} \pmod{p^r}}.
\end{equation}

Zhao \cite{Zhao4} extended this result to the case when there are four variables. He proved that for and prime $p\ge 5$ and integer $r\ge 2$,
\begin{equation}\label{Zhao2}
\sum\limits_{\begin{smallmatrix}
 {{l}_{1}}+\cdots +{{l}_{4}}={{p}^{r}} \\
 {{l}_{1}},\cdots ,{{l}_{4}}\in {\mathcal{P}_{p}}
\end{smallmatrix}}{\frac{1}{{{l}_{1}}{{l}_{2}}{{l}_{3}}{{l}_{4}}}}\equiv -\frac{4!}{5}{{p}^{r}}{{B}_{p-5}} \pmod{{p}^{r+1}}.
\end{equation}

In viewing of congruences (\ref{Wang}) and (\ref{Zhao2}), for $1 \le k \le n$ we define
\[S_{n}^{(k)}({{p}^{r}})=\sum\limits_{\begin{smallmatrix}
 {{l}_{1}}+\cdots +{{l}_{n}}=k{{p}^{r}} \\
 l_{i} < p^{r}, l_{i} \in {\mathcal{P}_{p}}
\end{smallmatrix}}{\frac{1}{{{l}_{1}}{{l}_{2}}\cdots {{l}_{n}}}}.\]

It would be very interesting to find a general congruence for $S_{n}^{(1)}(p^{r})$ modulo $p^r$ when $n$ is odd or modulo $p^{r+1}$ when $n$ is even. The cases $n=3$ and $4$ have been solved from (\ref{Wang}) and (\ref{Zhao2}). As $n$ increases, the problem becomes much more difficult. The main goal of this paper is to establish the result for the case $n=5$.
\begin{thm}\label{main}
Let $p > 5$ be a prime and $r\ge 2$ be an integer. We have
\[\sum\limits_{\begin{smallmatrix}
 {{l}_{1}}+{{l}_{2}}+\cdots +{{l}_{5}}={{p}^{r}} \\
 {{l}_{1}},\cdots ,{{l}_{5}}\in {\mathcal{P}_{p}}
\end{smallmatrix}}{\frac{1}{{{l}_{1}}{{l}_{2}}{{l}_{3}}{{l}_{4}}{{l}_{5}}}}\equiv -\frac{5!}{6}{{p}^{r-1}}{{B}_{p-5}} \pmod{{{p}^{r}}}.\]
\end{thm}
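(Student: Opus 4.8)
The plan is to expand the condition $l_i \in \Polar$ by inclusion–exclusion on which variables are divisible by $p$, reducing the restricted sum over $\set{l_i \in \Polar}$ to a combination of unrestricted multiple harmonic sums modulo $p^r$. Writing
\[
T_n(p^r) = \sum_{\begin{smallmatrix} l_1+\cdots+l_n = p^r \\ l_i > 0 \end{smallmatrix}} \frac{1}{l_1\cdots l_n},
\]
and grouping terms according to the set of indices $i$ with $p \mid l_i$ (for which one substitutes $l_i = p m_i$), one gets an expression of the shape $\sum_{j} (-1)^j \binom{5}{j} p^{-j} T_{5-j}^{*}(\cdot)$, where the inner sums range over tuples still forced to be prime to $p$ in the remaining slots; iterating the inclusion–exclusion (or equivalently using the identity relating $S_n^{(k)}$ for different moduli) expresses everything in terms of the $S_m^{(k)}(p^{r-1})$ and $S_m^{(k)}(p^r)$ for $m \le 5$. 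The key point is that the $p^{-j}$ factors are compensated by extra powers of $p$ coming from known congruences for the lower sums, so nothing blows up.

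Next I would bring in the already-established cases. By \eqref{Wang} we know $S_3^{(1)}(p^r) \equiv -2p^{r-1}B_{p-3}\pmod{p^r}$, and more generally one needs the analogues of \eqref{zhou} and \eqref{Zhao2} "one prime-power down," i.e. modulo $p^{r-1}$ resp.\ $p^{r}$, together with the symmetric-function/power-sum machinery: expand $\frac{1}{l_i} = \frac{1}{p^r - (\text{others})}$ as a geometric series, or more systematically use Newton's identities to convert the elementary symmetric sum $\sum 1/(l_1\cdots l_5)$ into power sums $\sum_{l \in \Polar, l<p^r} l^{-k}$. Those power sums are evaluated by the standard Bernoulli-number congruence: for $p-1 \nmid k$,
\[
\sum_{\begin{smallmatrix} 0 < l < p^r \\ p \nmid l \end{smallmatrix}} \frac{1}{l^{k}} \equiv \frac{k(k+1)}{2(k+2)}\, p^{2r}\, B_{p-k-2} + (\text{lower order}) \pmod{p^{?}},
\]
refined to the precision $p^{r}$ needed here; the weight-$5$ total forces $B_{p-5}$ to be the only surviving Bernoulli number (since $p>5$ guarantees $p-1 \nmid 5$ and $p-5$ is in range), and a careful count of the multinomial coefficients produces the constant $-5!/6$.

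The main obstacle is the bookkeeping of $p$-adic valuations through the symmetric-function reduction: when one writes $1/(l_1\cdots l_5)$ in terms of power sums $P_k = \sum l^{-k}$ weighted by partitions of $5$, each $P_k$ with $k \geq 2$ carries a factor $p^{2r}$-worth of cancellation from the Bernoulli congruence but also a "main term" $-1/l^{k}$-type piece that must be tracked to relative precision, and the cross terms $P_1^a P_2^b \cdots$ mix these to various depths. Keeping every contribution to precision $p^r$ — in particular showing that all terms except the one feeding into $p^{r-1}B_{p-5}$ are either $\equiv 0$ or cancel in pairs by the symmetry $l_i \leftrightarrow p^r - (\text{complementary sum})$ — is where the real work lies; the combinatorial identities among binomial coefficients that yield exactly $5!/6$ should then fall out, as they did for $n=3,4$ in \cite{Wang,Zhao4}, by specializing a generating-function identity. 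I would handle the valuation issue by first proving a clean lemma: for $1 \le n \le 5$ and $k$ with $1 \le k \le n$, $S_n^{(k)}(p^{r})$ is congruent mod $p^{r}$ to an explicit $p^{r-1}$-multiple of a single Bernoulli number (with the even/odd dichotomy of \eqref{zhou}), proved by induction on $n$ using the inclusion–exclusion above, and then Theorem~\ref{main} is the $n=5$, $k=1$ instance.
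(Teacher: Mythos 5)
There is a genuine gap: what you describe is a plan whose two central reductions do not go through as stated, and the one step you concede is ``where the real work lies'' is precisely the content of the theorem. Concretely: (a) the inclusion--exclusion over the indices with $p\mid l_i$ does not yield sums of the shape $\binom{5}{j}p^{-j}T_{5-j}(\cdot)$, because after substituting $l_i=pm_i$ in $j$ slots the constraint becomes $p\sum m_i+\sum_{\text{other}} l_i=p^r$, which does not split off an unrestricted lower sum; moreover the unrestricted $T_n(p^r)$ are not $p$-integral (terms with $l_i$ divisible by high powers of $p$ occur), so asserting that ``the $p^{-j}$ factors are compensated'' modulo $p^r$ is exactly the unproved point. (b) Newton's identities convert elementary symmetric functions of the numbers $1/l$ into power sums, but the quantity here is a sum over compositions with $l_1+\cdots+l_5=p^r$ fixed, not a symmetric function of the $1/l$'s; a partial-sum substitution does turn it into an ordered harmonic sum, but only after dividing by $p^r$, so one would need the $4$-fold harmonic sum over $[1,p^r)$ to precision $p^{2r}$, with uncontrolled correction terms from partial sums divisible by powers of $p$ --- nothing you cite supplies this for $r\ge 2$. (c) The ``analogues of (\ref{zhou}) one prime-power down'' for five variables are the statement being proved, and your proposed omnibus lemma for all $S_n^{(k)}(p^r)$, $n\le 5$, is asserted rather than derived, so the argument is circular at the point where it would have to produce the constant $-5!/6$.

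The paper's proof avoids all of this by working one prime-power at a time: writing $l_i=x_ip^r+y_i$ and counting the $x$-solutions (Lemma \ref{rec}(ii), Lemma \ref{Casolution}) gives the linear system $S_5^{(1)}(p^{r+1})\equiv \tfrac12 pS_5^{(1)}(p^r)-\tfrac16 pS_5^{(2)}(p^r)$ and $S_5^{(2)}(p^{r+1})\equiv -\tfrac32 pS_5^{(1)}(p^r)+\tfrac12 pS_5^{(2)}(p^r) \pmod{p^{r+1}}$, hence $S_5^{(1)}(p^{r+1})\equiv pS_5^{(1)}(p^r)\pmod{p^{r+1}}$ for $r\ge 2$. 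Everything then rests on two evaluations modulo $p$: $S_5^{(1)}(p)\equiv -4!B_{p-5}$ from (\ref{zhou}), and the genuinely new ingredient $S_5^{(2)}(p)\equiv 2\cdot 4!B_{p-5}$ (Lemma \ref{S52modp}), obtained by converting the sum over $l_1+\cdots+l_5=2p$ into ordered harmonic sums on $[1,2p)$ and using Lemma \ref{2plem}; the constant comes from $\tfrac12(-4!)-\tfrac16(2\cdot 4!)=-\tfrac{5!}{6}$, not from a multinomial identity. Your sketch identifies neither the need for $S_5^{(2)}(p)$ modulo $p$ nor any mechanism for passing from $p^r$ to $p^{r+1}$ with control of the modulus, and these are the two pillars of the actual proof.
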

While $r=1$, by (\ref{zhou}) the congruence should be ${{S}_{5}^{(1)}}(p)\equiv -4!{{B}_{p-5}}$ (mod $p$). This is different to the case $r\ge 2$. From (\ref{zhou}) and (\ref{Zhao2}), this phenomenon also happens for the case $n=4$.
\begin{thm}\label{thm2}
Let $p > 5$ be a prime and $n$ be a positive integer. Suppose  $p^{r}|n$ but $p^{r+1} \nmid n$ for some positive integer $r$.\\
 (i) If $r=1$, then
\[\sum\limits_{\begin{smallmatrix}
 {{l}_{1}}+{{l}_{2}}+\cdots +{{l}_{5}}=n \\
 {{l}_{1}},\cdots ,{{l}_{5}}\in {\mathcal{P}_{p}}
\end{smallmatrix}}{\frac{1}{{{l}_{1}}{{l}_{2}}{{l}_{3}}{{l}_{4}}{{l}_{5}}}}\equiv -\frac{4!}{6}\cdot\Big(5\cdot\frac{n}{p}+\big(\frac{n}{p}\big)^{3}\Big){{B}_{p-5}} \pmod{p}.\]
(ii) If $r \ge 2$, then
\[\sum\limits_{\begin{smallmatrix}
 {{l}_{1}}+{{l}_{2}}+\cdots +{{l}_{5}}=n \\
 {{l}_{1}},\cdots ,{{l}_{5}}\in {\mathcal{P}_{p}}
\end{smallmatrix}}{\frac{1}{{{l}_{1}}{{l}_{2}}{{l}_{3}}{{l}_{4}}{{l}_{5}}}}\equiv -\frac{5!}{6}\cdot\frac{n}{p}{{B}_{p-5}} \pmod{{{p}^{r}}}.\]
\end{thm}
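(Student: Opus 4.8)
The plan is to deduce Theorem~\ref{thm2} from Theorem~\ref{main} by a rescaling argument, exploiting the multiplicative structure of the summation index. Write $n = p^{r}m$ with $p\nmid m$, so $n/p = p^{r-1}m$. The first step is to pass from the constraint $l_{1}+\cdots+l_{5}=n$ with each $l_{i}\in\mathcal{P}_{p}$ to the constraint with sum $p^{r}$. I would group the five-tuples $(l_{1},\dots,l_{5})$ summing to $p^{r}m$ according to the residues $l_{i}\bmod p^{r}$; writing $l_{i}=p^{r}a_{i}+b_{i}$ with $0\le b_{i}<p^{r}$, $b_{i}\in\mathcal{P}_{p}$ (note $b_{i}\neq 0$ since $p\nmid l_{i}$), the reciprocal $1/l_{i}\equiv 1/b_{i}-p^{r}a_{i}/b_{i}^{2}\pmod{p^{2r}}$, and in fact modulo $p^{r}$ we simply have $1/l_{i}\equiv 1/b_{i}$. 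So modulo $p^{r}$ the sum only depends on the residues $b_{i}$, and $b_{1}+\cdots+b_{5}\equiv 0\pmod{p^{r}}$, i.e.\ $b_{1}+\cdots+b_{5}\in\{p^{r},2p^{r},3p^{r},4p^{r}\}$, which is exactly $\sum_{k=1}^{4}S_{5}^{(k)}(p^{r})$ weighted by the number of $(a_{i})$ realizing each configuration with fixed total $p^{r-1}m$.

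The second step is therefore to evaluate the combinatorial weights and to compute $S_{5}^{(k)}(p^{r})$ for $k=1,2,3,4$. For each fixed $(b_{1},\dots,b_{5})$ with $\sum b_{i}=kp^{r}$ ($1\le k\le 4$), the number of $(a_{1},\dots,a_{5})$ of nonnegative integers with $\sum a_{i}=p^{r-1}m-k$ is $\binom{p^{r-1}m-k+4}{4}$, which modulo $p^{r}$ I would expand as a polynomial in $p^{r-1}m=n/p$; since $(n/p)^{2}\equiv 0\pmod{p^{r}}$ when $r\ge 2$ (as $p^{2(r-1)}\mid (n/p)^{2}$ and $2(r-1)\ge r$), only the constant and linear terms in $n/p$ survive when $r\ge 2$, while for $r=1$ the cubic term can also contribute, explaining the different shape of part~(i). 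The symmetry $b_{i}\mapsto p^{r}-b_{i}$ gives a bijection between configurations with sum $kp^{r}$ and those with sum $(5-k)p^{r}$, so $S_{5}^{(1)}(p^{r})$ and $S_{5}^{(4)}(p^{r})$ are governed by the same residue data, as are $S_{5}^{(2)}(p^{r})$ and $S_{5}^{(3)}(p^{r})$; combined with Theorem~\ref{main} (which handles $S_{5}^{(1)}(p^{r})$) and the analogous known evaluations — or a direct short computation via the power-sum/Bernoulli machinery underlying \eqref{zhou} and \eqref{Zhao2} — one obtains $S_{5}^{(k)}(p^{r})\bmod p^{r}$ in terms of $B_{p-5}$ for each $k$.

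Assembling these pieces, the left side of Theorem~\ref{thm2} becomes, modulo $p^{r}$, a linear combination $\sum_{k=1}^{4} c_{k}(n/p)\, S_{5}^{(k)}(p^{r})$ where each $c_{k}(n/p)$ is the (polynomial-in-$n/p$) binomial weight; substituting the evaluations of $S_{5}^{(k)}(p^{r})$ and collecting terms should yield $-\frac{5!}{6}\cdot\frac{n}{p}B_{p-5}$ when $r\ge 2$ and the stated expression $-\frac{4!}{6}\big(5(n/p)+(n/p)^{3}\big)B_{p-5}$ when $r=1$. As a consistency check, specializing $n=p^{r}$ (so $n/p=p^{r-1}$, $m=1$) recovers Theorem~\ref{main} for $r\ge 2$, and specializing $n=p$ recovers $-4!B_{p-5}\pmod p$ from \eqref{zhou} since $-\frac{4!}{6}(5+1)=-4!$.

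The main obstacle I anticipate is the evaluation of $S_{5}^{(k)}(p^{r})$ for $k=2,3$ modulo $p^{r}$: unlike $k=1$ (Theorem~\ref{main}) and $k=4,5$ (reachable by the symmetry $b_{i}\mapsto p^{r}-b_{i}$ from $k=1,0$), the "middle" values require either an independent argument of the same difficulty as Theorem~\ref{main} or a clever reduction. One workable route is to note that $\sum_{k=0}^{5} S_{5}^{(k)}(p^{r})$ is the full sum $\sum 1/(l_{1}\cdots l_{5})$ over all $l_{i}<p^{r}$ in $\mathcal{P}_{p}$ with no sum restriction, which factors as $\big(\sum_{l<p^{r},\,l\in\mathcal{P}_{p}} 1/l\big)^{5}$ and is therefore controlled by Wolstenholme-type congruences; combined with similar identities for partial sums (e.g.\ fixing how many $l_{i}$ lie below versus the complementary telescoping), this pins down the middle terms. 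Managing the $p$-adic valuations carefully — in particular tracking which error terms are genuinely $O(p^{r})$ versus needing $O(p^{2r})$ control — will be the delicate bookkeeping, but no new idea beyond Theorem~\ref{main} and standard Bernoulli-number congruences should be needed.
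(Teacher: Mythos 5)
Your overall skeleton is the paper's (decompose $l_i=x_ip^{r}+y_i$, reduce mod $p^{r}$ to a binomially weighted combination of $S_5^{(k)}(p^{r})$, use the symmetry of Lemma \ref{rec}(i)), but two concrete points break the argument as written. First, the weight is wrong: with $n=mp^{r}$, $p\nmid m$, and $\sum y_i=ap^{r}$, the $x_i$ satisfy $\sum x_i=m-a$, so the count is $\binom{m-a+4}{4}$ --- a polynomial in $m=n/p^{r}$, not in $n/p=p^{r-1}m$ as you claim. Since $p\nmid m$, no power of $m$ dies modulo $p^{r}$, so your explanation of the different shapes of (i) and (ii) via ``$(n/p)^{2}\equiv 0\pmod{p^{r}}$ for $r\ge 2$'' is not the actual mechanism and does not apply. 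The real reason is that $S_5^{(2)}$ relates to $S_5^{(1)}$ differently at the two levels: $S_5^{(2)}(p)\equiv-2S_5^{(1)}(p)\pmod p$ but $S_5^{(2)}(p^{r})\equiv-3S_5^{(1)}(p^{r})\pmod{p^{r}}$ for $r\ge2$ (this is \eqref{s21}, coming from the recurrences \eqref{S512rec} in the proof of Theorem \ref{main}); consequently the weights collapse as $\binom{m+3}{4}-2\binom{m+2}{4}+2\binom{m+1}{4}-\binom{m}{4}=\tfrac16(5m+m^{3})$ in case (i) and $\binom{m+3}{4}-3\binom{m+2}{4}+3\binom{m+1}{4}-\binom{m}{4}=m$ in case (ii).

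Second, and more seriously, the evaluation of the ``middle'' term $S_5^{(2)}$ --- which you correctly identify as the main obstacle --- is not actually obtained by your proposed route. The identity you invoke, that $\sum_{k}S_5^{(k)}(p^{r})$ equals the unrestricted sum $\bigl(\sum_{l<p^{r},\,l\in\mathcal{P}_p}1/l\bigr)^{5}$, is false: the left-hand side runs only over tuples with $p^{r}\mid l_1+\cdots+l_5$, a small subfamily of all tuples. Moreover, even the correct aggregate carries no new information, since Lemma \ref{rec}(i) gives $S_5^{(4)}\equiv-S_5^{(1)}$ and $S_5^{(3)}\equiv-S_5^{(2)}$ modulo $p^{r}$, so $S_5^{(1)}+S_5^{(2)}+S_5^{(3)}+S_5^{(4)}\equiv0$ automatically and cannot isolate $S_5^{(2)}$. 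What is actually needed is: for $r\ge2$, the congruence $S_5^{(2)}(p^{r})\equiv-3S_5^{(1)}(p^{r})$ extracted from the coupled recurrences inside the proof of Theorem \ref{main} (which your proposal never uses), and for $r=1$ the congruence $S_5^{(2)}(p)\equiv2\cdot4!\,B_{p-5}\pmod p$, which is precisely Lemma \ref{S52modp} and requires a genuine computation (the $u_i$-substitution over $(0,2p)$ together with Lemmas \ref{zhouxialem} and \ref{2plem}). Your appeal to ``a direct short computation via the power-sum/Bernoulli machinery'' does not supply this input, so in particular part (i) is not proved by the proposal as it stands.
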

In particular, if $n=p^{r}$, then part (ii) of Theorem \ref{thm2} becomes Theorem \ref{main}.

Different from the method used for proving (\ref{Wang})-(\ref{Zhao2}) (see \cite{Wang,Zhao4}), the idea to prove Theorem \ref{main} is to establish the recurrence relation
\[S_{5}^{(1)}({{p}^{r+1}})\equiv pS_{5}^{(1)}({{p}^{r}}) \pmod {{{p}^{r+1}}}, \quad r \ge 2.\]
And then we only need to prove that $S_{5}^{(1)}(p^{2}) \equiv -\frac{5!}{6}pB_{p-5}$ (mod $p^{2}$). This idea can also be applied to give a new proof of (\ref{Wang}) and (\ref{Zhao2}).

\section{Preliminaries}

\begin{lem}\label{rec}
Let $p >n $ be a prime and $1 \le k \le n-1$. For any integer $r\ge 1$, we have\\
(i) $S_{n}^{(k)}({{p}^{r}})\equiv {{(-1)}^{n}}S_{n}^{(n-k)}({{p}^{r}})$ \text{\rm{(mod ${{p}^{r}}$)}}. \\
(ii) ${{S}_{n}^{(1)}}({{p}^{r+1}})\equiv \sum\limits_{k=1}^{n-1} {\binom {p-k+n-1}{n-1} S_{n}^{(k)}({{p}^{r}})} $  \text{\rm{(mod  ${{p}^{r+1}})$}}.
\end{lem}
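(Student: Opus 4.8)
The plan is to prove Lemma~\ref{rec} in two parts, each reducing to a reindexing of the summation together with elementary congruences.

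\smallskip

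\noindent\textbf{Part (i).} The idea is the substitution $l_i \mapsto p^r - l_i$ for each $i$. Since $l_i \in \Polar$ with $l_i < p^r$ (more precisely, the relevant range forces $l_i < p^r$ once we observe that $k p^r = l_1 + \cdots + l_n$ with all $l_i$ positive, and for the congruence~(i) I would work with the natural convention that each $l_i$ lies strictly between $0$ and $p^r$), the map $l_i \mapsto p^r - l_i$ is an involution on $\Polar \cap (0, p^r)$. Under this substitution, $\sum l_i = k p^r$ becomes $\sum (p^r - l_i) = n p^r - k p^r = (n-k) p^r$, so the index set of $S_n^{(k)}(p^r)$ is carried bijectively onto that of $S_n^{(n-k)}(p^r)$. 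It remains to compare the summands: $\frac{1}{\prod (p^r - l_i)} \equiv \frac{(-1)^n}{\prod l_i} \pmod{p^r}$, because $\frac{1}{p^r - l_i} = \frac{-1}{l_i}\cdot\frac{1}{1 - p^r/l_i} \equiv \frac{-1}{l_i} \pmod{p^r}$ (expanding the geometric series; all terms beyond the first carry a factor $p^r$, and $l_i$ is a unit mod $p^r$). Multiplying these $n$ congruences gives the claim. The only care needed is that the manipulation $\frac{1}{p^r - l_i}\equiv -\frac{1}{l_i}$ is an identity of elements of $\mathbb{Z}_{(p)}/p^r\mathbb{Z}_{(p)}$, which is fine since each $l_i$ is prime to $p$.

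\smallskip

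\noindent\textbf{Part (ii).} Here the strategy is to classify the tuples $(l_1, \ldots, l_n)$ counted by $S_n^{(1)}(p^{r+1})$ according to their residues modulo $p^r$. Write each $l_i = a_i + p^r b_i$ where $a_i \in \{1, \ldots, p^r\}$ is the residue (with $a_i \in \Polar$, i.e. $p \nmid a_i$, since $p \nmid l_i$) and $b_i \ge 0$. The constraint $l_1 + \cdots + l_n = p^{r+1}$ forces $a_1 + \cdots + a_n \equiv 0 \pmod{p^r}$, and since $0 < a_1 + \cdots + a_n < n p^r \le p \cdot p^r$ we get $a_1 + \cdots + a_n = k p^r$ for some $k$ with $1 \le k \le n-1$. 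For fixed residues $(a_1, \ldots, a_n)$ in class $k$, we have $\frac{1}{\prod l_i} \equiv \frac{1}{\prod a_i} \pmod{p^r}$ — but we need this mod $p^{r+1}$, so instead write $\frac{1}{l_i} = \frac{1}{a_i}\cdot\frac{1}{1 + p^r b_i/a_i} \equiv \frac{1}{a_i}\bigl(1 - p^r b_i a_i^{-1}\bigr) \pmod{p^{r+1}}$ and expand the product, keeping terms up to order $p^r$. Summing over all admissible lifts $(b_1,\ldots,b_n)$ with $b_i \ge 0$ and $\sum b_i = p - k$ (this comes from $\sum l_i = p^{r+1}$, i.e. $\sum a_i + p^r \sum b_i = p^{r+1}$, so $\sum b_i = p - \sum a_i / p^r = p - k$), the number of such lifts is $\binom{(p-k) + n - 1}{n-1}$, which produces the stated binomial coefficient as the coefficient of the ``main'' term $\frac{1}{\prod a_i}$. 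I expect the cross terms (those involving a single factor $p^r b_i a_i^{-1}$) to contribute something divisible by $p^{r+1}$ after summing over the simplex $\{\sum b_i = p - k\}$: summing $b_i$ over that simplex gives a symmetric quantity, and one checks the resulting coefficient is divisible by $p$ — this uses that $\sum_{\sum b_j = p-k} b_i = \frac{p-k}{n}\binom{p-k+n-1}{n-1}$ and that $p \mid \binom{p-k+n-1}{n-1}$ fails in general, so the divisibility must instead come from combining the cross-term sum with a Bernoulli-type identity, \emph{or} more cleanly by noting $S_n^{(k)}$ is only needed mod $p^r$ on the right-hand side while the left is mod $p^{r+1}$, meaning any term already carrying one factor of $p^r$ times something summing to a multiple of $p$ vanishes.

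\smallskip

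\noindent\textbf{Main obstacle.} The delicate point is precisely the bookkeeping in part (ii): showing that after expanding $\prod 1/l_i$ to first order in $p^r$ and summing over all lifts, the correction terms collapse modulo $p^{r+1}$, so that only the main term survives with multiplicity $\binom{p-k+n-1}{n-1}$. This requires the combinatorial identity for $\sum_{b_1 + \cdots + b_n = m} b_i$ over the discrete simplex and a verification that the leftover is a multiple of $p$ — plausibly using $1 \le k \le n-1 < p$ so that the relevant binomial coefficients and the factor $(p-k)/n$ interact correctly, or invoking symmetry in the $a_i$'s to pair terms. Everything else is routine reindexing and geometric-series expansion of $1/(1+p^r x)$ modulo a power of $p$.
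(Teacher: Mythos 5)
Your part (i) is exactly the paper's argument and is correct (the constraint $l_i<p^r$ is already built into the definition of $S_n^{(k)}(p^r)$, so no extra convention is needed). In part (ii) you also follow the same route as the paper: write $l_i=a_i+p^rb_i$, sort the tuples by the class $k$ with $\sum a_i=kp^r$ and $\sum b_i=p-k$, expand $1/\prod l_i$ to first order in $p^r$, and count the lifts by $\binom{p-k+n-1}{n-1}$. However, the one point you yourself flag as the ``main obstacle'' is precisely the point you leave unproved, and both of your suggested escapes are off: no Bernoulli-type identity is involved, and the remark that ``$S_n^{(k)}$ is only needed mod $p^r$ on the right'' is circular, since the whole issue is to show that the quantity multiplying $p^r$ in the cross terms is divisible by $p$.

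The missing step is elementary. The cross term attached to the index $i$ and the class $k$ is $p^r\bigl(\sum_{b_1+\cdots+b_n=p-k,\,b_j\ge 0}b_i\bigr)\cdot\sum\frac{1}{a_1\cdots a_{i-1}a_i^2a_{i+1}\cdots a_n}$, and the second factor is a $p$-adic integer because every $a_j$ is prime to $p$. By the symmetry you already invoked, $\sum_{b_1+\cdots+b_n=p-k}b_i=\frac{p-k}{n}\binom{p-k+n-1}{n-1}=\binom{p-k+n-1}{n}$. This last binomial coefficient is $\frac{1}{n!}$ times the product of the $n$ consecutive integers $p-k,\,p-k+1,\ldots,p-k+n-1$; since $1\le k\le n-1$ this range contains $p$ (because $p-k\le p\le p-k+n-1$), and since $n<p$ the factor $n!$ is prime to $p$, so $p\mid\binom{p-k+n-1}{n}$. (You were right that $p\mid\binom{p-k+n-1}{n-1}$ fails in general; the divisibility lives in $\binom{p-k+n-1}{n}$, i.e.\ it comes from the factor $\frac{p-k}{n}$ combining with the binomial coefficient.) Hence each cross term is $\equiv 0\pmod{p^{r+1}}$ and only the main terms $\binom{p-k+n-1}{n-1}S_n^{(k)}(p^r)$ survive, which is exactly how the paper concludes; with this one divisibility inserted your argument is complete and coincides with the paper's proof.
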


\begin{proof}
(i) Since $({{y}_{1}},\cdots ,{{y}_{n}})\leftrightarrow ({{p}^{r}}-{{y}_{1}},\cdots ,{{p}^{r}}-{{y}_{n}})$ gives a bijection between the solutions of \[{{y}_{1}}+\cdots +{{y}_{n}}=k{{p}^{r}},\quad y_{i} \in \mathcal{P}_{p},\quad  y_{i}<p^{r}, \quad 1 \le i \le n\]
and
\[{{y}_{1}}+\cdots +{{y}_{n}}=(n-k){{p}^{r}}, \quad y_{i} \in \mathcal{P}_{p}, \quad y_{i}<p^{r}, \quad 1 \le i \le n\]
we have
\begin{displaymath}
\begin{split}
  S_{n}^{(k)}(p^{r}) & = \sum\limits_{\begin{smallmatrix}
 {{y}_{1}}+\cdots +{{y}_{n}}=k{{p}^{r}} \\
 y_{i} < p^r, y_{i}\in {\mathcal{P}_{p}}
\end{smallmatrix}}{\frac{1}{{{y}_{1}}\cdots {{y}_{n}}}} \\
&=\sum\limits_{\begin{smallmatrix}
 {{y}_{1}}+\cdots +{{y}_{n}}=(n-k){{p}^{r}} \\
  y_{i} < p^r, y_{i}\in {\mathcal{P}_{p}}
\end{smallmatrix}}{\frac{1}{({{p}^{r}}-{{y}_{1}})\cdots ({{p}^{r}}-{{y}_{n}})}} \\
 & \equiv {{(-1)}^{n}}S_{n}^{(n-k)}({{p}^{r}}) \pmod {{{p}^{r}}}. \\
\end{split}
\end{displaymath}
(ii) For any $n$-tuple $({{l}_{1}},\cdots ,{{l}_{n}})$  of positive integers satisfying  ${{l}_{1}}+{{l}_{2}}+\cdots +{{l}_{n}}={{p}^{r+1}}$ and  ${{l}_{1}},\cdots ,{{l}_{n}}\in {\mathcal{P}_{p}}$, we  rewrite them as
\[{{l}_{i}}={{x}_{i}}{{p}^{r}}+{{y}_{i}}, \quad 0\le {{x}_{i}}<p, \quad 1\le {{y}_{i}}<{{p}^{r}}, \quad y_{i}\in {\mathcal{P}_{p}}, \quad 1\le i \le n.\]
 Since $(p-\sum\limits_{i=1}^{n}{{{x}_{i}}}){{p}^{r}}= \sum\limits_{i=1}^{n}{{{y}_{i}}}<n{{p}^{r}}$, we know there exists $1 \le k \le n-1$ such that
\begin{displaymath}
\left\{ \begin{array}{ll}
{{x}_{1}}+{{x}_{2}}+\cdots +{{x}_{n}}=p-k \\
{{y}_{1}}+{{y}_{2}}+\cdots +{{y}_{n}}=k{{p}^{r}}
\end{array} \right..
\end{displaymath}

Hence
\[{{S}_{n}^{(1)}}({{p}^{r+1}})=\sum\limits_{k=1}^{n-1}{\sum\limits_{\begin{smallmatrix}
 {{x}_{1}}+\cdots +{{x}_{n}}=p-k \\
 {{y}_{1}}+\cdots +{{y}_{n}}=kp^{r}
 \\
 x_{i} \ge 0, y_{i} < p^r, y_{i}\in {\mathcal{P}_{p}}
\end{smallmatrix}}{\frac{1}{({{x}_{1}}{{p}^{r}}+{{y}_{1}})\cdots ({{x}_{n}}{{p}^{r}}+{{y}_{n}})}}}.\]
Given $1 \le k \le n-1$, the equation ${{x}_{1}}+{{x}_{2}}+\cdots +{{x}_{n}}=p-k$ has $\binom {p-k+n-1}{n-1}$ solutions $(x_{1},x_{2}, \cdots , x_{n})$ of nonnegative integers.
Because
\[({{x}_{1}}{{p}^{r}}+{{y}_{1}})\cdots ({{x}_{n}}{{p}^{r}}+{{y}_{n}})\equiv \Big({{y}_{1}}\cdots {{y}_{n}}\sum\limits_{i=1}^{n}{\frac{{{x}_{i}}}{{{y}_{i}}}}\Big){{p}^{r}}+{{y}_{1}}\cdots {{y}_{n}}  \pmod{{{p}^{2r}}},\]
and
\[\sum\limits_{\begin{smallmatrix}{{x}_{1}}+\cdots +{{x}_{n}}=p-k\\ x_{i} \ge 0, 1 \le i \le n \end{smallmatrix}}{{{x}_{i}}}=\frac{1}{n}\sum\limits_{\begin{smallmatrix}{{x}_{1}}+\cdots +{{x}_{n}}=p-k \\ x_{i} \ge 0, 1 \le i \le n \end{smallmatrix}}{({{x}_{1}}+\cdots +{{x}_{n}})}=\frac{p-k}{n} \binom {p-k+n-1}{n-1} \equiv 0 \pmod{p}.\]
We have
\begin{displaymath}
\begin{split}
  {{S}_{n}^{(1)}}({{p}^{r+1}})  &\equiv \sum\limits_{k=1}^{n-1}{\sum\limits_{\begin{smallmatrix}{{x}_{1}}+\cdots +{{x}_{n}}=p-k \\ x_{i} \ge 0, 1 \le i \le n \end{smallmatrix}}{\sum\limits_{\begin{smallmatrix}
 {{y}_{1}}+\cdots +{{y}_{n}}=kp^{r} \\
  y_{i} < p^r, y_{i}\in {\mathcal{P}_{p}}, 1 \le i \le n
\end{smallmatrix}}{\frac{1}{({{y}_{1}}\cdots {{y}_{n}}\sum\limits_{i=1}^{n}{\frac{{{x}_{i}}}{{{y}_{i}}}}){{p}^{r}}+{{y}_{1}}\cdots {{y}_{n}}}}}} \\
 & \equiv \sum\limits_{k=1}^{n-1}{\sum\limits_{\begin{smallmatrix}{{x}_{1}}+\cdots +{{x}_{n}}=p-k \\ x_{i} \ge 0 , 1 \le i \le n\end{smallmatrix}}{\sum\limits_{\begin{smallmatrix}
 {{y}_{1}}+\cdots +{{y}_{n}}=kp^{r} \\
 y_{i} < p^r, y_{i}\in {\mathcal{P}_{p}}, 1 \le i \le n
\end{smallmatrix}}{\frac{{{y}_{1}}\cdots {{y}_{n}}-({{y}_{1}}\cdots {{y}_{n}}\sum\limits_{i=1}^{n}{\frac{{{x}_{i}}}{{{y}_{i}}}}){{p}^{r}}}{{{({{y}_{1}}\cdots {{y}_{n}})}^{2}}}}}}  \\
 & \equiv \sum\limits_{k=1}^{n-1}{\binom {p-k+n-1}{n-1} \sum\limits_{\begin{smallmatrix}
 {{y}_{1}}+\cdots +{{y}_{n}}=kp^{r} \\
 y_{i} < p^r, y_{i}\in {\mathcal{P}_{p}}, 1 \le i \le n
\end{smallmatrix}}{\frac{1}{{{y}_{1}}\cdots {{y}_{n}}}}} \\
& \quad -{{p}^{r}}\sum\limits_{k=1}^{n-1}{\sum\limits_{i=1}^{n}{\sum\limits_{\begin{smallmatrix}{{x}_{1}}+\cdots +{{x}_{n}}=p-k \\x_{j} \ge 0 , 1 \le j \le n\end{smallmatrix}}{{{x}_{i}}}\sum\limits_{\begin{smallmatrix}
 {{y}_{1}}+\cdots +{{y}_{n}}=kp^{r} \\
y_{j} < p^r, y_{j}\in {\mathcal{P}_{p}}, 1 \le j \le n
\end{smallmatrix}}{\frac{1}{{{y}_{1}}\cdots {{y}_{i-1}}y_{i}^{2}{{y}_{i+1}}\cdots {{y}_{n}}}}}} \\
 &  \equiv \sum\limits_{k=1}^{n-1}{\binom {p-k+n-1}{n-1} \sum\limits_{\begin{smallmatrix}
 {{y}_{1}}+\cdots +{{y}_{n}}=kp^{r} \\
 y_{i} < p^r, y_{i}\in {\mathcal{P}_{p}}, 1 \le i \le n
\end{smallmatrix}}{\frac{1}{{{y}_{1}}\cdots {{y}_{n}}}}}  \pmod{p^{r+1}}.
\end{split}
\end{displaymath}
This completes the proof of (i).
\end{proof}

\begin{lem}\label{Casolution}
Let $p>5$ be a prime. For $1 \le a \le 4$, we denote by ${{C}_{a}}$ the number of solutions $({{x}_{1}},\cdots ,{{x}_{5}})$ of nonnegative integers of the equation
\[{{x}_{1}}+\cdots +{{x}_{5}}=2p-a,\quad 0\le {{x}_{i}}<p, \quad 1 \le i \le 5.\]
Then we have\\
(i) ${{C}_{a}}= \binom {2p-a+4}{4} -5 \binom{p-a+4}{4}$. \\
(ii) ${{C}_{1}}\equiv -\frac{3}{4}p$ \text{\rm{(mod $p^2$)}}, ${{C}_{2}}\equiv \frac{1}{4}p$ \text{\rm{(mod $p^2$)}},  ${{C}_{3}}\equiv -\frac{1}{4}p$ \text{\rm{(mod ${p^2}$)}}, ${{C}_{4}}\equiv \frac{3}{4}p$ \text{\rm{(mod $p^2$)}}. \\
(iii) $\sum\limits_{\begin{smallmatrix}
 {{x}_{1}}+\cdots +{{x}_{5}}=2p-a \\
 0\le {{x}_{i}}<p, 1 \le i \le n
\end{smallmatrix}}{{{x}_{1}}}  \equiv 0$ \text{\rm{(mod $p$)}}.
\end{lem}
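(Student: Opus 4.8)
The plan is to establish the three parts in sequence. For (i), I would argue by inclusion--exclusion on the upper bounds $x_i<p$. Dropping these bounds, the equation $x_1+\cdots+x_5=2p-a$ has $\binom{2p-a+4}{4}$ nonnegative solutions. Since $2p-a<2p$, no solution can have two coordinates $\ge p$ (their sum alone would be $\ge 2p$); and for a fixed index $i$ the substitution $x_i\mapsto x_i+p$ puts the bad solutions with $x_i\ge p$ in bijection with the nonnegative solutions of $x_1+\cdots+x_5=p-a$, all of which automatically satisfy $x_j<p$ because their sum is $p-a<p$. Hence exactly $5\binom{p-a+4}{4}$ solutions are excluded, giving (i) on the nose.

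For (ii), set $f(x)=\binom{x+4}{4}=\frac{1}{24}(x+1)(x+2)(x+3)(x+4)$, whose coefficients are $p$-integral since $p>5$, so $f(mp-a)\equiv f(-a)+mp\,f'(-a)\pmod{p^2}$ for any integer $m$. From (i), $C_a=f(2p-a)-5f(p-a)$, and therefore
\[
C_a\equiv\bigl(f(-a)+2p\,f'(-a)\bigr)-5\bigl(f(-a)+p\,f'(-a)\bigr)=-4f(-a)-3p\,f'(-a)\pmod{p^2}.
\]
Because $-a\in\{-1,-2,-3,-4\}$ is a zero of $(x+1)(x+2)(x+3)(x+4)$, we get $f(-a)=0$, so $C_a\equiv-3p\,f'(-a)\pmod{p^2}$. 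Finally $f'(-a)=\frac{1}{24}\prod_{1\le j\le 4,\ j\ne a}(j-a)$ equals $\tfrac14,\,-\tfrac1{12},\,\tfrac1{12},\,-\tfrac14$ for $a=1,2,3,4$ respectively, which yields the four stated congruences.

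For (iii), I would use the symmetry of the solution set under permuting $(x_1,\dots,x_5)$: summing the relation $x_1+\cdots+x_5=2p-a$ over all $C_a$ solutions and dividing by $5$ gives $\sum x_1=\frac{(2p-a)C_a}{5}$, which is an integer. By (ii) we have $p\mid C_a$, and since $p\nmid 5$ it follows that $p$ divides this integer, i.e.\ $\sum x_1\equiv 0\pmod p$. The only step needing care is the modulo-$p^2$ expansion in (ii), but the vanishing $f(-a)=0$ makes it routine; the mild subtlety in (i) is the observation that the total $2p-a$ is small enough to forbid two large coordinates, so the inclusion--exclusion terminates after one term.
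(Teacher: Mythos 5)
Your proposal is correct and follows essentially the same route as the paper: part (i) is the same one-step inclusion--exclusion (using that no two coordinates can both be $\ge p$), and part (iii) is the same symmetry argument $\sum x_1=\tfrac{1}{5}(2p-a)C_a$ combined with $p\mid C_a$. In part (ii) your linearization $f(mp-a)\equiv f(-a)+mp\,f'(-a)\pmod{p^2}$ with $f(-a)=0$ is just a repackaging of the paper's direct expansion, since $f'(-a)=\frac{1}{24}\prod_{j\ne a}(j-a)=\frac{(-1)^{a-1}(a-1)!(4-a)!}{4!}$ is exactly the quantity the paper extracts, and your numerical values check out.
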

\begin{proof}
(i) Note that for any solution $({{x}_{1}},\cdots ,{{x}_{5}})$ of the equation
\[{{x}_{1}}+\cdots +{{x}_{5}}=2p-a, \quad {{x}_{i}}\ge 0, \quad 1 \le i \le 5,\]
at most one of ${{x}_{i}}\, (1 \le i \le 5)$ can be greater than or equal to $p$.

By Inclusion-Exclusion Principle, we deduce that
\begin{displaymath}
\begin{split}
{{C}_{a}} &=  \#\Big\{(x_{1},x_{2},x_{3},x_{4},x_{5})\left|{{{x}_{1}}+\cdots +{{x}_{5}}=2p-a, x_{i} \ge 0, 1 \le i \le 5}\right.\Big\} \\
 & \quad -5\sum\limits_{k=0}^{p-a}{\#\Big\{(x_{1},x_{2},x_{3},x_{4},x_{5})\left|{{{x}_{1}}+\cdots +{{x}_{4}}=p-a-k,{{x}_{5}}=p+k,0\le {{x}_{i}}<p, 1\le i\le 4}\right.\Big\}} \\
& = \binom {2p-a+4}{4} -5 \sum\limits_{k=0}^{p-a}{\binom {p-a-k+3}{3} } \\
& = \binom {2p-a+4}{4}- 5\sum\limits_{k=3}^{p-a+3}{\binom {k}{3}} \\
&=  \binom {2p-a+4}{4}- 5\binom{p-a+4}{4}. \\
\end{split}
\end{displaymath}

(ii) For $u=1$ or $2$ we have
\[ \binom {up-a+4}{4}=\frac {(up-a+4)(up-a+3)(up-a+2)(up-a+1)}{4!}.\]
Since $1 \le a \le 4$, we deduce that
\[\binom {up-a+4}{4} \equiv \frac{(-1)^{a-1}(a-1)!(4-a)!}{4!}up \pmod{p^2}.\]
From which the congruences in (ii) follows by (i) and some simple calculations.

(iii) By (ii) we have $C_{a} \equiv 0$ (mod $p$). Hence
\[\sum\limits_{\begin{smallmatrix}
 {{x}_{1}}+\cdots +{{x}_{5}}=2p-a \\
 0\le {{x}_{i}}<p, 1 \le i \le 5
\end{smallmatrix}}{{{x}_{1}}} =\frac{1}{5}\sum\limits_{\begin{smallmatrix}{{x}_{1}}+\cdots +{{x}_{5}}=2p-a \\ 0 \le x_{i} <p, 1 \le i \le 5 \end{smallmatrix}}{({{x}_{1}}+\cdots +{{x}_{5}})}=\frac{2p-a}{5}{{C}_{a}} \equiv 0 \pmod {p}.\]
\end{proof}

\begin{lem}\label{zhouxialem}(Cf. \cite{zhouxia})
Let $r,{{\alpha }_{1}},\cdots ,{{\alpha }_{n}}$ be positive integers, $r={{\alpha }_{1}}+\cdots +{{\alpha }_{n}}\le p-3$. Then
\[\sum\limits_{\begin{smallmatrix}
 1\le {{l}_{1}},\cdots ,{{l}_{n}}\le p-1 \\
 {{l}_{i}}\ne {{l}_{j}}, \forall i \ne j
\end{smallmatrix}}{\frac{1}{l_{1}^{{{\alpha }_{1}}}l_{2}^{{{\alpha }_{2}}}\cdots l_{n}^{{{\alpha }_{n}}}}}\equiv \left\{ \begin{array}{ll}
   {{(-1)}^{n}}(n-1)!\frac{r(r+1)}{2(r+2)}{{B}_{p-r-2}}{{p}^{2}}  \pmod{{{p}^{3}}},  & \hbox{if $2 \nmid r$}; \\
  {{(-1)}^{n-1}}(n-1)!\frac{r}{r+1}{{B}_{p-r-1}}p   \pmod{{{p}^{2}}}, & \hbox{if $2 | r$}. \\
\end{array} \right.\]
\end{lem}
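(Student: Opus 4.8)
The plan is to reduce the sum over pairwise-distinct indices to ordinary power sums and then invoke the classical congruences for the latter. Write $H_\beta:=\sum_{l=1}^{p-1}l^{-\beta}$. By repeated inclusion-exclusion on which of the $l_i$ coincide (equivalently, by M\"obius inversion over the lattice of set partitions of $\{1,\dots,n\}$), one gets
\[
\sum_{\substack{1\le l_1,\dots,l_n\le p-1\\ l_i\ne l_j,\ \forall\, i\ne j}}\frac{1}{l_1^{\alpha_1}\cdots l_n^{\alpha_n}}=\sum_{\pi}\ \prod_{B\in\pi}(-1)^{|B|-1}(|B|-1)!\,H_{\alpha(B)},
\]
where $\pi$ runs over all set partitions of $\{1,\dots,n\}$ and $\alpha(B):=\sum_{i\in B}\alpha_i$; for $n=2$ this is simply $H_{\alpha_1}H_{\alpha_2}-H_{\alpha_1+\alpha_2}$. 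Note that $\alpha(B)\le r\le p-3$ for every block $B$.

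Next I would record the standard congruences for power sums: for $2\le\beta\le p-3$,
\[
H_\beta\equiv\frac{\beta}{\beta+1}B_{p-\beta-1}\,p\pmod{p^2}\quad(\beta\ \text{even}),\qquad H_\beta\equiv-\frac{\beta(\beta+1)}{2(\beta+2)}B_{p-\beta-2}\,p^2\pmod{p^3}\quad(\beta\ \text{odd}),
\]
together with $H_1\equiv0\pmod{p^2}$. These may be quoted from the literature, or derived quickly: for even $\beta$ use $l^{-\beta}\equiv l^{\,p(p-1)-\beta}\pmod{p^2}$, Bernoulli's summation formula, and Kummer's congruence; for odd $\beta$ pair $l$ with $p-l$ and expand $(p-l)^{-\beta}$ to obtain $H_\beta\equiv-\frac{\beta}{2}\,p\,H_{\beta+1}\pmod{p^3}$, reducing to the even case. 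The qualitative upshot is that $p\mid H_\beta$ for all $1\le\beta\le p-3$, and $p^2\mid H_\beta$ when moreover $\beta$ is odd.

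It then remains to see that in the displayed partition sum only the one-block partition $\hat{1}=\{\{1,\dots,n\}\}$ matters; its contribution is $(-1)^{n-1}(n-1)!\,H_r$. If $r$ is even, every other partition has at least two blocks, so its summand is a product of at least two of the $H_{\alpha(B)}$, each divisible by $p$, hence $\equiv0\pmod{p^2}$; plugging the even-$\beta$ formula for $H_r$ gives the asserted value $(-1)^{n-1}(n-1)!\frac{r}{r+1}B_{p-r-1}p\pmod{p^2}$. If $r$ is odd, then for any partition with $m\ge2$ blocks the block sums $\alpha(B)$ have parities adding up to odd: a two-block partition must be one odd block plus one even block, contributing a factor $O(p^2)$ times a factor $O(p)$; a partition with $\ge3$ blocks contributes at least three factors of $p$; in all such cases the summand is $\equiv0\pmod{p^3}$. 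Hence again only $\hat{1}$ survives, and the odd-$\beta$ formula for $H_r$ produces $(-1)^n(n-1)!\frac{r(r+1)}{2(r+2)}B_{p-r-2}p^2\pmod{p^3}$, as claimed.

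The main obstacle is not structural---the partition bookkeeping is a short, purely parity-driven argument---but lies in the power-sum congruences, which must be established to two significant $p$-adic digits (mod $p^3$ for odd $\beta$, mod $p^2$ for even $\beta$); this requires some care with Bernoulli's formula and Kummer's congruence, and one must check that the Bernoulli indices $p-\beta-1$ and $p-\beta-2$ that occur lie in the admissible range, which is precisely what the hypothesis $r\le p-3$ (with $p$ odd) guarantees.
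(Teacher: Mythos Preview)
Your argument is sound. The set-partition identity you invoke is the standard M\"obius inversion over the partition lattice, and the parity bookkeeping that kills every partition except the single block $\hat 1$ is correct: with $m\ge 2$ blocks the product carries at least $p^2$ (even $r$) or $p^3$ (odd $r$), using only $p\mid H_\beta$ and $p^2\mid H_\beta$ for odd $\beta$. The one small point to tidy is the case $\beta=1$: you state the precise odd-$\beta$ formula only for $\beta\ge 2$ and then append $H_1\equiv 0\pmod{p^2}$, but when $r=1$ (hence $n=1$) you need the full $H_1\equiv -\tfrac13 B_{p-3}p^2\pmod{p^3}$. Your own pairing trick $H_1\equiv -\tfrac12 pH_2\pmod{p^3}$ already delivers this, so just extend the odd-$\beta$ formula down to $\beta=1$.

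As for comparison: the paper does not prove this lemma at all; it is quoted from Zhou and Cai. Their argument (and the paper's own proof of the companion Lemma~\ref{2plem}) proceeds by induction on $n$, peeling off the last variable via
\[
\sum_{\substack{l_1,\dots,l_n\\ \text{distinct}}}\frac{1}{l_1^{\alpha_1}\cdots l_n^{\alpha_n}}
=\Big(\sum_{\substack{l_1,\dots,l_{n-1}\\ \text{distinct}}}\frac{1}{l_1^{\alpha_1}\cdots l_{n-1}^{\alpha_{n-1}}}\Big)H_{\alpha_n}
-\sum_{i=1}^{n-1}\sum_{\substack{l_1,\dots,l_{n-1}\\ \text{distinct}}}\frac{1}{l_1^{\alpha_1}\cdots l_i^{\alpha_i+\alpha_n}\cdots l_{n-1}^{\alpha_{n-1}}},
\]
so that the product term is negligible and each of the $n-1$ remaining sums contributes the same thing by the inductive hypothesis, yielding the factor $(n-1)!$. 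Your approach is a genuinely different packaging: rather than an induction that accumulates the factorial one step at a time, you expand once over all set partitions and observe that the only surviving block is $\hat 1$, where the coefficient $(-1)^{n-1}(n-1)!$ appears directly as the M\"obius function of the partition lattice. The inductive route is closer to first principles and needs no lattice machinery; your route is shorter and makes the structure (why the answer depends only on $r$ and $n$, not on the individual $\alpha_i$) more transparent.
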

This leads to the following corollary.
\begin{cor}\label{lemcor}
Let $\alpha$ be a positive integer and $p \ge \alpha +3$ a prime. Then
\[\sum\limits_{1 \le l <p}{\frac{1}{l^{\alpha}}} \equiv \left\{ \begin{array}{ll}
0 \pmod{p^2} & \hbox{if $2 \nmid \alpha$}; \\
0 \pmod{p} & \hbox{if $2 | \alpha$}.
\end{array} \right.\]
\end{cor}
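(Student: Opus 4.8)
The plan is to obtain Corollary \ref{lemcor} as the single-variable specialization of Lemma \ref{zhouxialem}. First I would set $n=1$ and $\alpha_{1}=\alpha$ in that lemma; the hypothesis $r=\alpha_{1}+\cdots+\alpha_{n}=\alpha\le p-3$ is precisely the assumption $p\ge\alpha+3$. For $n=1$ the restriction ``$l_{i}\ne l_{j}$ for all $i\ne j$'' is vacuous, so the left-hand side of Lemma \ref{zhouxialem} is exactly $\sum_{1\le l<p}1/l^{\alpha}$, the sum appearing in the corollary.

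It then remains only to read off the right-hand side in the two parities. If $\alpha$ is odd, Lemma \ref{zhouxialem} yields
\[\sum_{1\le l<p}\frac{1}{l^{\alpha}}\equiv -\frac{\alpha(\alpha+1)}{2(\alpha+2)}B_{p-\alpha-2}\,p^{2}\pmod{p^{3}},\]
and if $\alpha$ is even it yields
\[\sum_{1\le l<p}\frac{1}{l^{\alpha}}\equiv \frac{\alpha}{\alpha+1}B_{p-\alpha-1}\,p\pmod{p^{2}}.\]
In each case the right-hand side is $p^{2}$ (resp.\ $p$) times a rational number which I claim is $p$-integral; granting this, the right-hand side is congruent to $0$ modulo $p^{2}$ (resp.\ $p$), which is exactly the assertion of the corollary (and in the odd case the lemma in fact delivers a congruence modulo $p^{3}$, more than is needed).

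Finally I would verify the $p$-integrality. The rational prefactors have denominators dividing $2(\alpha+2)$ in the odd case and $\alpha+1$ in the even case; since $1\le\alpha\le p-3$ we have $\alpha+2\le p-1$ and $\alpha+1\le p-2$, while $p\ge\alpha+3\ge 4$ together with the primality of $p$ forces $p\ge 5$, so none of $2$, $\alpha+1$, $\alpha+2$ is divisible by $p$. For the Bernoulli numbers I would invoke the von Staudt--Clausen theorem: $B_{m}$ is $p$-integral whenever $(p-1)\nmid m$. Here $m=p-\alpha-2$ satisfies $1\le m\le p-3$ in the odd case and $m=p-\alpha-1$ satisfies $2\le m\le p-2$ in the even case, so in both cases $0<m<p-1$ and hence $(p-1)\nmid m$. (One may even skip this step: the left-hand side is visibly $p$-integral and the lemma is a congruence modulo $p^{3}$ or $p^{2}$, so the right-hand side must be $p$-integral too.) I do not anticipate any real obstacle here; Lemma \ref{zhouxialem} does all the work, and the only thing worth spelling out is this routine check that the coefficients on the right carry no factor of $p$ in their denominators.
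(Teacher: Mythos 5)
Your proof is correct and is essentially the paper's own route: the paper deduces Corollary \ref{lemcor} directly from Lemma \ref{zhouxialem} by the same specialization $n=1$, $\alpha_{1}=\alpha$ (it merely says ``this leads to the following corollary''). Your explicit check that the rational coefficients and the Bernoulli numbers $B_{p-\alpha-2}$, $B_{p-\alpha-1}$ are $p$-integral is a reasonable piece of bookkeeping that the paper leaves implicit.
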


\begin{lem}\label{2plem}
Let $p\ge 3$ be a prime, and ${{\alpha }_{1}},\cdots ,{{\alpha }_{n}}$ be positive integers, where $r={{\alpha }_{1}}+\cdots +{{\alpha }_{n}}\le p-3$.  We have
\begin{displaymath}
\sum\limits_{\begin{smallmatrix}
 1\le {{l}_{1}},\cdots ,{{l}_{n}}<2p \\
 {{l}_{i}}\ne {{l}_{j}},{{l}_{i}}\in {\mathcal{P}_{p}}
\end{smallmatrix}}{\frac{1}{l_{1}^{{{\alpha }_{1}}}l_{2}^{{{\alpha }_{2}}}\cdots l_{n}^{{{\alpha }_{n}}}}}\equiv \left\{ \begin{array}{ll}
 {{(-1)}^{n}}(n-1)!\frac{2r(r+1)}{r+2}{{B}_{p-r-2}}{{p}^{2}}  \pmod{{{p}^{3}}} & \hbox{if $2 \nmid r$}; \\
{{(-1)}^{n-1}}(n-1)!\frac{2r}{r+1}{{B}_{p-r-1}}p  \pmod{{{p}^{2}}} & \hbox{if $2 | r$}.
\end{array}\right.
\end{displaymath}
\end{lem}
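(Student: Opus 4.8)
The plan is to reduce the sum over $l_i \in \mathcal{P}_p$ with $l_i < 2p$ to the sum over the complete residue system $1 \le l_i < 2p$ with the entries coprime to $p$, and then split off the contribution of the ``second block'' $p < l_i < 2p$ by writing $l_i = p + m_i$. Concretely, I would first observe that $\{1 \le l < 2p : l \in \mathcal{P}_p\} = \{1,\dots,p-1\} \cup \{p+1,\dots,2p-1\}$, so each variable $l_i$ ranges over two ``copies'' of $\{1,\dots,p-1\}$, one shifted by $p$. Expanding the sum accordingly, one gets $2^n$ terms indexed by which variables lie in the upper block; by symmetry of the left-hand side under permuting the $\alpha_i$ is \emph{not} available (the exponents differ), but the structure is still manageable because a variable $l_i = p + m_i$ contributes $l_i^{-\alpha_i} = (p+m_i)^{-\alpha_i} \equiv m_i^{-\alpha_i} - \alpha_i p \, m_i^{-\alpha_i - 1} \pmod{p^2}$ (and modulo $p^3$ one needs the next term as well when $r$ is odd).

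Next I would carry out this expansion carefully to the required precision. When $r$ is even we only need everything mod $p^2$, so each upper-block factor $m_i^{-\alpha_i} - \alpha_i p\, m_i^{-\alpha_i-1}$ can be truncated: the terms with a factor of $p$ will only matter when multiplied against something that is a unit, and the resulting ``derivative'' sums $\sum 1/(l_1^{\alpha_1}\cdots l_i^{\alpha_i+1} \cdots l_n^{\alpha_n})$ have total weight $r+1$, which Lemma~\ref{zhouxialem} controls (it is $O(p)$ when $r+1$ is odd, i.e. $r$ even — wait, $r+1$ odd gives $O(p^2)$). The cleanest bookkeeping is: the ``main'' term where every variable is replaced by its residue mod $p$ appears $2^n$ times (once for each subset $S \subseteq \{1,\dots,n\}$ of upper-block variables), giving $2^n \sum_{l_i \ne l_j,\, 1\le l_i \le p-1} \prod l_i^{-\alpha_i}$; the correction terms carry an explicit power of $p$ and a sum of one higher weight. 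Applying Lemma~\ref{zhouxialem} to the main term (which already has the factor $(-1)^n (n-1)! \frac{r(r+1)}{2(r+2)} B_{p-r-2} p^2$ or the even-$r$ analogue) and multiplying by $2^n$ would overshoot; so the subtler point is that the distinctness condition $l_i \ne l_j$ across the two blocks is automatic (an upper-block element never equals a lower-block element), but within the expansion we must re-impose $m_i \ne m_j$ when two variables land in the same block, and the cross-terms where, say, $l_i$ is lower and $l_j$ is upper have $m_i$ and $m_j$ ranging \emph{independently} over $\{1,\dots,p-1\}$, so a further inclusion--exclusion is needed to restore the ``all distinct mod $p$'' sums that Lemma~\ref{zhouxialem} evaluates.

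Here is where the main obstacle lies: organizing this inclusion--exclusion over block-assignments and coincidences so that the diagonal contributions (two variables equal mod $p$) are shown to be of sufficiently high $p$-adic order to be discarded. For $r$ odd we work mod $p^3$, and a pair $l_i = l_j$ mod $p$ collapses $l_i^{-\alpha_i} l_j^{-\alpha_j}$ into $l^{-(\alpha_i+\alpha_j)}$, producing a sum of weight $r$ in $n-1$ variables with all distinct mod $p$, which by Lemma~\ref{zhouxialem} is still $O(p^2)$ when $r$ is odd — but we also gain a combinatorial factor and possibly lose a factor of $p$ from the shift expansion, so one must check the orders line up. I expect that after the dust settles, the count is: main term contributes a factor that, combined with the $2$ from ``two blocks'' appearing once per variable but \emph{linearly} (because the leading behavior of each block-sum past the diagonal is what matters), yields exactly the factor $2r(r+1)/(r+2)$ versus $r(r+1)/(2(r+2))$, i.e. a ratio of $4 = 2^2$, reflecting that the dominant surviving configurations have all $n$ variables free to choose a block (factor $2^n$) but the distinctness forces a compensating $2^{-(n-2)}$ from the diagonal inclusion--exclusion.

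An alternative, and probably cleaner, route I would try first is induction on $n$ together with the symmetric-function / generating-function identity relating power sums over $\mathcal{P}_p \cap [1,2p)$ to those over $[1,p)$: writing $H^{(\alpha)}_{<2p} = \sum_{l<2p,\, p\nmid l} l^{-\alpha}$ and noting $H^{(\alpha)}_{<2p} = H^{(\alpha)}_{<p} + \sum_{m<p}(p+m)^{-\alpha}$, one expands the latter by the binomial series and uses Corollary~\ref{lemcor} to kill the odd-weight pieces mod $p^2$ (resp. $p^3$). Then the multi-variable sum with the $l_i \ne l_j$ restriction is expressed via Newton's identities in terms of these power sums, and Lemma~\ref{zhouxialem} supplies the $[1,p)$ evaluations. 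This avoids the messy block inclusion--exclusion at the cost of a symmetric-function computation, and in either approach the genuinely delicate step is confirming that all error terms — the shift-expansion tails and the coincidence terms — are $o(p^2)$ (mod $p^3$, odd $r$) or $o(p)$ (mod $p^2$, even $r$), which follows from Lemma~\ref{zhouxialem} and Corollary~\ref{lemcor} once the weights are tracked, but requires care because some of those sums have weight exactly $r$ or $r+1$ and sit right at the boundary of the estimates.
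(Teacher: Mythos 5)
Your second route is, in outline, the paper's actual proof: the paper evaluates the $n=1$ case by exactly your shift expansion, obtaining
\[
\sum\limits_{\begin{smallmatrix}0<l<2p\\ l\ne p\end{smallmatrix}}\frac{1}{l^{\alpha}}\equiv 2\sum\limits_{0<l<p}\frac{1}{l^{\alpha}}-\alpha p\sum\limits_{0<l<p}\frac{1}{l^{\alpha+1}} \pmod{p^{3}},
\]
and then inducts on $n$ by writing the distinct $n$-variable sum as (distinct $(n-1)$-variable sum)$\times$(single power sum over $[1,2p)\setminus\{p\}$) minus the $n-1$ ``coincidence'' sums in which $\alpha_n$ is merged onto another exponent; the product term is negligible ($O(p^{3})$ for odd $r$, $O(p^{2})$ for even $r$), and the merged weight-$r$ sums carry the entire answer. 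This is precisely the partition/inclusion--exclusion bookkeeping you allude to, and carried out it shows that, up to negligible terms, the whole sum equals $(-1)^{n-1}(n-1)!\sum_{0<l<2p,\,l\ne p} l^{-r}$.

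The genuine gap is that you never perform this decisive accounting, and where you do commit to numbers you get the mechanism wrong. First, you repeatedly flag the boundary-order terms (the coincidence/diagonal sums of full weight $r$, and the weight-$(r+1)$ derivative sums) as things that ``must be checked,'' but these are not error terms to be bounded: the diagonal terms are exactly what build up the factor $(-1)^{n-1}(n-1)!$ and must be summed with their coefficients, while the weight-$(r+1)$ correction is negligible only when $r$ is even and contributes at full strength when $r$ is odd. Second, your explanation of the constants in the first route is incorrect: the ratio between this lemma and Lemma~\ref{zhouxialem} is $2$ when $r$ is even and $4$ when $r$ is odd, and it has nothing to do with a $2^{n}$ count of block assignments compensated by $2^{-(n-2)}$ from diagonals (a heuristic that, besides being unsupported, cannot simultaneously produce the ratio $4$ for odd $r$ and $2$ for even $r$ since $n$ is arbitrary). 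The factor comes entirely from the single weight-$r$ sum displayed above: for odd $r$ the two terms $2\sum_{0<l<p}l^{-r}$ and $-rp\sum_{0<l<p}l^{-(r+1)}$ each contribute $-\frac{r(r+1)}{r+2}p^{2}B_{p-r-2}$ (hence the $4$), while for even $r$ the correction has odd weight $r+1$ and is $O(p^{3})$, leaving only the doubling. Without identifying that the surviving contribution is $(-1)^{n-1}(n-1)!$ times this one-variable sum — which is the content of the paper's induction step — the proposal does not yet prove the lemma.
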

\begin{proof}
For any positive integer $\alpha \le p-3$, we have
\begin{displaymath}
\begin{split}
   \sum\limits_{p<l<2p}{\frac{1}{{{l}^{\alpha }}}}&=\sum\limits_{0<l<p}{\frac{1}{{{(l+p)}^{\alpha }}}}\equiv \sum\limits_{0<l<p}{\frac{{{(l-p)}^{\alpha }}}{{{({{l}^{2}}-{{p}^{2}})}^{\alpha }}}} \\
 & \equiv \sum\limits_{0<l<p}{\frac{{{l}^{\alpha }}-\alpha p{{l}^{\alpha -1}}+\frac{\alpha (\alpha -1)}{2}{{p}^{2}}{{l}^{\alpha -2}}}{{{l}^{2\alpha }}-\alpha {{p}^{2}}{{l}^{2(\alpha -1)}}}} \\
 & \equiv \sum\limits_{0<l<p}{\frac{({{l}^{2\alpha }}+\alpha {{p}^{2}}{{l}^{2(\alpha -1)}})\big({{l}^{\alpha }}-\alpha p{{l}^{\alpha -1}}+\frac{\alpha (\alpha -1)}{2}{{p}^{2}}{{l}^{\alpha -2}}\big)}{{{l}^{4\alpha }}}} \\
 & \equiv \sum\limits_{0<l<p}{\frac{1}{{{l}^{\alpha }}}}-\alpha p\sum\limits_{0<l<p}{\frac{1}{{{l}^{\alpha +1}}}}+\frac{\alpha (\alpha +1)}{2}{{p}^{2}}\sum\limits_{0<l<p}{\frac{1}{{{l}^{\alpha +2}}}}  \pmod{{{p}^{3}}}. \\
\end{split}
\end{displaymath}
Because $\sum\nolimits_{0<l<p}{\frac{1}{{{l}^{\alpha +2}}}}\equiv 0$ (mod $p$), we have
\[\sum\limits_{\begin{smallmatrix}
 0<l<2p \\
 l\ne p
\end{smallmatrix}}{\frac{1}{{{l}^{\alpha }}}}=\sum\limits_{1\le l<p}{\frac{1}{{{l}^{\alpha }}}}+\sum\limits_{p<l<2p}{\frac{1}{{{l}^{\alpha }}}}\equiv 2\sum\limits_{0<l<p}{\frac{1}{{{l}^{\alpha }}}}-\alpha p\sum\limits_{0<l<p}{\frac{1}{{{l}^{\alpha +1}}}} \pmod{{{p}^{3}}}.\]
By Lemma \ref{zhouxialem}, we obtain
\begin{displaymath}
\sum\limits_{\begin{smallmatrix}0<l<2p\\ l \ne p \end{smallmatrix}}{\frac{1}{{{l}^{\alpha }}}}\equiv  \left\{ \begin{array}{ll}
-\frac{2\alpha (\alpha +1)}{\alpha +2}{{p}^{2}}{{B}_{p-\alpha -2}}  \pmod{{{p}^{3}}} & \hbox{if $2 \nmid \alpha $};\\
\frac{2\alpha }{\alpha +1}p{{B}_{p-\alpha -1}} \pmod{{{p}^{2}}} &  \hbox{if $ 2| \alpha$}.
\end{array} \right.
\end{displaymath}
This proves the lemma for $n=1$. Now assume the lemma is true when the number of variables is less than $n$.
We have
\begin{displaymath}
\begin{split}
   \sum\limits_{\begin{smallmatrix}
 1\le {{l}_{1}},\cdots ,{{l}_{n}}<2p \\
 {{l}_{i}}\ne {{l}_{j}}, l_{i} \in \mathcal{P}_{p}
\end{smallmatrix}}{\frac{1}{l_{1}^{{{\alpha }_{1}}}\cdots l_{n}^{{{\alpha }_{n}}}}}&=\sum\limits_{\begin{smallmatrix}
 1\le {{l}_{1}},\cdots ,{{l}_{n-1}}<2p \\
 {{l}_{i}}\ne {{l}_{j}}, l_{i} \in \mathcal{P}_{p}
\end{smallmatrix}}{\frac{1}{l_{1}^{{{\alpha }_{1}}}\cdots l_{n-1}^{{{\alpha }_{n-1}}}}}\Big(\sum\limits_{\begin{smallmatrix}
 1\le {{l}_{n}}<2p \\
 {{l}_{n}}\ne p
\end{smallmatrix}}{\frac{1}{l_{n}^{{{\alpha }_{n}}}}}-\sum\limits_{i=1}^{n-1}{\frac{1}{l_{i}^{{\alpha_{n}}}}}\Big) \\
 & = \Big(\sum\limits_{\begin{smallmatrix}
 1\le {{l}_{1}},\cdots ,{{l}_{n-1}}<2p \\
 {{l}_{i}}\ne {{l}_{j}}, l_{i} \in \mathcal{P}_{p}
\end{smallmatrix}}{\frac{1}{l_{1}^{{{\alpha }_{1}}}\cdots l_{n-1}^{{{\alpha }_{n-1}}}}}\Big)\Big(\sum\limits_{\begin{smallmatrix}
 1\le {{l}_{n}}<2p \\
 {{l}_{n}}\ne p
\end{smallmatrix}}{\frac{1}{l_{n}^{{{\alpha }_{n}}}}}\Big) \\
 & \quad -\sum\limits_{\begin{smallmatrix}
 1\le {{l}_{1}},\cdots ,{{l}_{n-1}}<2p \\
 {{l}_{i}}\ne {{l}_{j}}, l_{i} \in \mathcal{P}_{p}
\end{smallmatrix}}{\frac{1}{l_{1}^{{{\alpha }_{1}}+{{\alpha }_{n}}}\cdots l_{n-1}^{{{\alpha }_{n-1}}}}}-\cdots  \\
 & \quad -\sum\limits_{\begin{smallmatrix}
 1\le {{l}_{1}},\cdots ,{{l}_{n-1}}<2p \\
 {{l}_{i}}\ne {{l}_{j}}, l_{i} \in \mathcal{P}_{p}
\end{smallmatrix}}{\frac{1}{l_{1}^{{{\alpha }_{1}}}\cdots l_{n-1}^{{{\alpha }_{n-1}}+{{\alpha }_{n}}}}} .\\
\end{split}
\end{displaymath}
From the assumption, we have
\begin{displaymath}
\Big(\sum\limits_{\begin{smallmatrix}
 1\le {{l}_{1}},\cdots ,{{l}_{n-1}}<2p \\
 {{l}_{i}}\ne {{l}_{j}}, l_{i} \in \mathcal{P}_{p}
\end{smallmatrix}}{\frac{1}{l_{1}^{{{\alpha }_{1}}}\cdots l_{n-1}^{{{\alpha }_{n-1}}}}}\Big)\Big(\sum\limits_{\begin{smallmatrix}
 1\le {{l}_{n}}<2p \\
 {{l}_{n}}\ne p
\end{smallmatrix}}{\frac{1}{l_{n}^{{{\alpha }_{n}}}}}\Big)\equiv   \left\{ \begin{array}{ll}
0  \pmod{p^3} & \hbox{if $ 2 \nmid r$}; \\
0  \pmod{p^2} & \hbox{if $ 2| r$}.
\end{array} \right.
\end{displaymath}

If $r$ is odd, then
\begin{displaymath}
\begin{split}
\sum\limits_{\begin{smallmatrix}
 1\le {{l}_{1}},\cdots ,{{l}_{n}}<2p \\
 {{l}_{i}}\ne {{l}_{j}},{{l}_{i}}\in {\mathcal{P}_{p}}
\end{smallmatrix}}{\frac{1}{l_{1}^{{{\alpha }_{1}}}l_{2}^{{{\alpha }_{2}}}\cdots l_{n}^{{{\alpha }_{n}}}}}   &  \equiv -(n-1)\sum\limits_{\begin{smallmatrix}
 1\le {{l}_{1}},\cdots ,{{l}_{n-1}}<2p \\
 {{l}_{i}}\ne {{l}_{j}},{{l}_{i}}\in {\mathcal{P}_{p}}
\end{smallmatrix}}{\frac{1}{l_{1}^{{{\alpha }_{1}}+{\alpha}_{n}}l_{2}^{{{\alpha }_{2}}}\cdots l_{n-1}^{{{\alpha }_{n-1}}}}} \\
 & \equiv -(n-1){{(-1)}^{n-1}}(n-2)!\frac{2r(r+1)}{r+2}{{B}_{p-r-2}}{{p}^{2}} \\
 & \equiv {{(-1)}^{n}}(n-1)!\frac{2r(r+1)}{r+2}{{B}_{p-r-2}}{{p}^{2}} \pmod{{{p}^{3}}}.
\end{split}
\end{displaymath}

If $r$ is even, similarly we can derive
\[\sum\limits_{\begin{smallmatrix}
 1\le {{l}_{1}},\cdots ,{{l}_{n}}<2p \\
 {{l}_{i}}\ne {{l}_{j}},{{l}_{i}}\in {\mathcal{P}_{p}}
\end{smallmatrix}}{\frac{1}{l_{1}^{{{\alpha }_{1}}}l_{2}^{{{\alpha }_{2}}}\cdots l_{n}^{{{\alpha }_{n}}}}}\equiv {{(-1)}^{n-1}}(n-1)!\frac{2r}{r+1}{{B}_{p-r-1}}p  \pmod{{{p}^{2}}}.\]
The proof of Lemma \ref{2plem} is complete by induction on $n$.
\end{proof}
By letting $r=n=1$ in this lemma, we obtain the following corollary.
\begin{cor}\label{lemcor2}
Let $\alpha$ be a positive integer and $p \ge \alpha +3$ be a prime. Then
\begin{displaymath}
\sum\limits_{1 \le l <2p, l \ne p}{\frac{1}{l^{\alpha}}} \equiv \left\{ \begin{array}{ll}
0 \pmod{p^{2}} & \hbox{if $2 \nmid \alpha$}; \\
0 \pmod{p} & \hbox{if $2 | \alpha$}.
\end{array}\right.
\end{displaymath}
\end{cor}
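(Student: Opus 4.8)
The plan is to read Corollary~\ref{lemcor2} off as the special case $n=1$, $\alpha_{1}=\alpha$ of Lemma~\ref{2plem}; the constraint $r=\alpha\le p-3$ there is exactly the present hypothesis $p\ge\alpha+3$. With $n=1$ the two cases of Lemma~\ref{2plem} become
\[\sum_{1\le l<2p,\ l\ne p}\frac{1}{l^{\alpha}}\equiv\begin{cases}-\dfrac{2\alpha(\alpha+1)}{\alpha+2}\,B_{p-\alpha-2}\,p^{2}\pmod{p^{3}}&\text{if }2\nmid\alpha,\\[2mm]\dfrac{2\alpha}{\alpha+1}\,B_{p-\alpha-1}\,p\pmod{p^{2}}&\text{if }2\mid\alpha.\end{cases}\]
So it remains only to note that the coefficient multiplying $p^{2}$ (resp.\ $p$) on the right is a $p$-adic integer; then the odd case is divisible by $p^{2}$ and the even case by $p$, which is precisely the claim. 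Indeed, the rational factors $\frac{2\alpha(\alpha+1)}{\alpha+2}$ and $\frac{2\alpha}{\alpha+1}$ have denominators at most $p-1$ (as $\alpha\le p-3$), hence are prime to $p$; and since $0<p-\alpha-2\le p-3<p-1$ and $0<p-\alpha-1\le p-2<p-1$, the von Staudt--Clausen theorem shows that $p$ divides the denominator of neither $B_{p-\alpha-2}$ nor $B_{p-\alpha-1}$. That finishes the proof.

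Should one prefer an argument independent of Lemma~\ref{2plem}, I would run the following short computation instead, using only Corollary~\ref{lemcor}. Split off the block $p<l<2p$ and substitute $l=p+m$, $1\le m\le p-1$; since each such $m$ is a unit modulo every power of $p$, the binomial series yields
\[\frac{1}{(p+m)^{\alpha}}\equiv\frac{1}{m^{\alpha}}-\frac{\alpha p}{m^{\alpha+1}}+\binom{\alpha+1}{2}\frac{p^{2}}{m^{\alpha+2}}\pmod{p^{3}},\]
and therefore
\[\sum_{1\le l<2p,\ l\ne p}\frac{1}{l^{\alpha}}\equiv 2\sum_{m=1}^{p-1}\frac{1}{m^{\alpha}}-\alpha p\sum_{m=1}^{p-1}\frac{1}{m^{\alpha+1}}+\binom{\alpha+1}{2}p^{2}\sum_{m=1}^{p-1}\frac{1}{m^{\alpha+2}}\pmod{p^{3}}.\]
The last term is divisible by $p^{2}$ outright. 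If $\alpha$ is odd then Corollary~\ref{lemcor} gives $\sum_{m}m^{-\alpha}\equiv 0\pmod{p^{2}}$; moreover $\alpha$ odd and $p$ odd force $\alpha\ne p-3$, so $\alpha+1\le p-3$ and Corollary~\ref{lemcor} applies once more to give $\sum_{m}m^{-(\alpha+1)}\equiv 0\pmod{p}$, whence $\alpha p\sum_{m}m^{-(\alpha+1)}\equiv 0\pmod{p^{2}}$; thus the whole sum vanishes mod $p^{2}$. If $\alpha$ is even then Corollary~\ref{lemcor} gives $\sum_{m}m^{-\alpha}\equiv 0\pmod{p}$, while the other two terms are visibly divisible by $p$, so the sum vanishes mod $p$.

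In short, there is essentially no obstacle: the result is a corollary in the literal sense. The only two points that deserve a line of care are (a) the $p$-integrality of the Bernoulli numbers appearing, which von Staudt--Clausen supplies once one observes that $\alpha\le p-3$ keeps the relevant indices strictly between $0$ and $p-1$; and (b), in the self-contained version, the small parity observation that an odd $\alpha$ automatically has $\alpha+1\le p-3$, so that Corollary~\ref{lemcor} can be invoked at the shifted exponent as well. I would present the one-line deduction from Lemma~\ref{2plem} as the proof and relegate the direct computation to a remark, if at all.
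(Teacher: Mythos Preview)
Your proposal is correct and follows exactly the paper's approach: the paper derives Corollary~\ref{lemcor2} in one line as the $n=1$ specialization of Lemma~\ref{2plem} (the paper's phrase ``letting $r=n=1$'' is a slip; what is meant is $n=1$, $r=\alpha$). You are in fact more careful than the paper, spelling out via von Staudt--Clausen that the Bernoulli factors on the right are $p$-integral, and your alternative direct computation from Corollary~\ref{lemcor} is a nice bonus but unnecessary here.
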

\begin{lem}\label{S52modp}
Let $p>5$ be a prime. We have
\[\sum\limits_{\begin{smallmatrix}
 {{l}_{1}}+\cdots +{{l}_{5}}=2p \\
 1\le {{l}_{1}},\cdots ,{{l}_{5}}<p
\end{smallmatrix}}{\frac{1}{{{l}_{1}}{{l}_{2}}{{l}_{3}}{{l}_{4}}{{l}_{5}}}}\equiv 2\cdot 4!{{B}_{p-5}} \pmod{p}.\]
\end{lem}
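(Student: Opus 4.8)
The plan is to realize the sum as the coefficient of $x^{2p}$ in a fifth power, reduce that polynomial modulo $p$, expand it, and then carry out an explicit $p$-adic computation. Set $f(x)=\sum_{l=1}^{p-1}x^{l}/l$. Since every index occurring in the sum lies in $\{1,\dots,p-1\}$, the left-hand side of the lemma equals $[x^{2p}]f(x)^{5}$. Using $\binom{p}{k}/p\equiv(-1)^{k-1}/k\pmod p$ for $1\le k\le p-1$, one checks coefficientwise the classical congruence $f(x)\equiv h(x):=p^{-1}\bigl(1-x^{p}-(1-x)^{p}\bigr)\pmod p$ in $\mathbb{Z}_{(p)}[x]$ (the numerator on the right has all coefficients divisible by $p$, as $p$ is odd). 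Hence the sum in the lemma is congruent mod $p$ to $[x^{2p}]h^{5}=p^{-5}\,[x^{2p}]\bigl(1-x^{p}-(1-x)^{p}\bigr)^{5}$.

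Next I would expand $\bigl(1-x^{p}-(1-x)^{p}\bigr)^{5}$ by the multinomial theorem, organised by the power $j$ of $x^{p}$ and the power $k$ of $(1-x)^{p}$; picking off $[x^{2p}]$ forces the coefficient of $x^{(2-j)p}$ in $(1-x)^{kp}$, namely $(-1)^{(2-j)p}\binom{kp}{(2-j)p}$, and since $p$ is odd a short bookkeeping gives
\[
[x^{2p}]\bigl(1-x^{p}-(1-x)^{p}\bigr)^{5}=\mathcal{C}:=-10+30\binom{2p}{p}-30\binom{3p}{p}+5\binom{4p}{p}+5\binom{4p}{2p}-\binom{5p}{2p}.
\]
So it remains to prove $\mathcal{C}\equiv 48\,p^{5}B_{p-5}\pmod{p^{6}}$, after which dividing by $p^{5}$ produces $2\cdot 4!\,B_{p-5}$.

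To evaluate $\mathcal{C}$, I would expand each binomial: writing $(ap)!=p^{a}a!\prod_{p\nmid j\le ap}j$ and grouping the $p$-free factors into blocks of length $p$ yields $\binom{ap}{bp}=\binom{a}{b}\prod_{t=1}^{a-1}E_{t}\big/\bigl(\prod_{t=1}^{b-1}E_{t}\prod_{t=1}^{a-b-1}E_{t}\bigr)$ with $E_{t}=\prod_{s=1}^{p-1}(1+tp/s)$, so that taking $p$-adic logarithms gives $\log E_{t}=\sum_{k\ge1}\frac{(-1)^{k-1}}{k}t^{k}p^{k}w_{k}$, where $w_{k}=\sum_{l=1}^{p-1}l^{-k}$. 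Since $w_{1}\equiv0\pmod{p^{2}}$ and $w_{2}\equiv0\pmod p$ by Corollary \ref{lemcor}, the exponent $X_{a,b}:=\log\bigl(\binom{ap}{bp}/\binom{a}{b}\bigr)$ is $O(p^{3})$, hence $\binom{ap}{bp}\equiv\binom{a}{b}(1+X_{a,b})\pmod{p^{6}}$ with $X_{a,b}=\sum_{k\ge1}\frac{(-1)^{k-1}}{k}p^{k}w_{k}\bigl(\sigma_{k}(a)-\sigma_{k}(b)-\sigma_{k}(a-b)\bigr)$ and $\sigma_{k}(m)=\sum_{t=1}^{m-1}t^{k}$. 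Substituting into $\mathcal{C}$, the ``principal part'' (replacing $\binom{ap}{bp}$ by $\binom{a}{b}$) is $-10+60-90+20+30-10=0$, and $\mathcal{C}\equiv\sum_{k\ge1}\frac{(-1)^{k-1}}{k}p^{k}w_{k}\Lambda_{k}\pmod{p^{6}}$ with $\Lambda_{k}=40-60\cdot2^{k}+40\cdot3^{k}-10\cdot4^{k}=10\sum_{j=1}^{4}(-1)^{j-1}\binom{4}{j}j^{k}$. Being a fourth finite difference, $\Lambda_{1}=\Lambda_{2}=\Lambda_{3}=0$ and $\Lambda_{4}=-240$, while the terms with $k\ge5$ vanish modulo $p^{6}$ (for $k=5$ because $w_{5}\equiv0\pmod p$ via the pairing $l\leftrightarrow p-l$, for $k\ge6$ trivially). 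Thus only $k=4$ survives, $\mathcal{C}\equiv-\frac14p^{4}w_{4}(-240)=60\,p^{4}w_{4}\pmod{p^{6}}$, and Lemma \ref{zhouxialem} (with $n=1$, $\alpha=4\le p-3$) gives $w_{4}\equiv\frac45 p B_{p-5}\pmod{p^{2}}$, so $\mathcal{C}\equiv 60\cdot\frac45 p^{5}B_{p-5}=48\,p^{5}B_{p-5}\pmod{p^{6}}$, as required.

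The main obstacle is the third and fourth steps above: one must expand the $\binom{ap}{bp}$ to sufficient $p$-adic precision while keeping the bookkeeping tractable, and naively this looks like it requires $w_{1}$ modulo $p^{5}$ and $w_{2}$ modulo $p^{4}$, which go beyond Lemma \ref{zhouxialem}. What rescues the computation is that the particular integer combination $\mathcal{C}$ produced by the generating function is precisely arranged so that the coefficients of $w_{1}$, $w_{2}$ and $w_{3}$ all annihilate, leaving only $w_{4}$ modulo $p^{2}$ — exactly the instance of Lemma \ref{zhouxialem} at our disposal.
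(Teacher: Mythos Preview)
Your argument is correct and follows a genuinely different route from the paper. The paper proves Lemma~\ref{S52modp} by first rewriting the sum via partial sums $u_i=l_1+\cdots+l_i$ as $\frac{5!}{2p}\sum_{u_1<\cdots<u_4<2p}\frac{1}{u_1u_2u_3u_4}$ with coprimality constraints, then performs a case analysis (splitting off the contributions where some $u_i=p$ and where consecutive $u_i$ differ by $p$), reduces each piece to distinct-index harmonic sums of the type in Lemma~\ref{zhouxialem} and Lemma~\ref{2plem}, and combines the numerical values. By contrast, you package the whole sum as $[x^{2p}]f(x)^5$, replace $f$ by the closed form $p^{-1}(1-x^p-(1-x)^p)$ modulo $p$, and are led to the single integer $\mathcal{C}$ built from the binomials $\binom{ap}{bp}$; the $p$-adic logarithm then converts the problem into a linear combination of the power sums $w_k$, and the key structural point---that the weights $\Lambda_k$ form a fourth finite difference, forcing $\Lambda_1=\Lambda_2=\Lambda_3=0$---explains \emph{a priori} why only $w_4\pmod{p^2}$ is needed, exactly the instance supplied by Lemma~\ref{zhouxialem}.

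What your approach buys is conceptual clarity about \emph{why} the potentially dangerous contributions from $w_1,w_2,w_3$ cancel (it is the finite-difference identity, not an accident of the individual $T_i$), and it avoids Lemma~\ref{2plem} entirely. The paper's approach, on the other hand, stays within the multiple-harmonic-sum framework used throughout and makes the connection to the sums $S_5^{(k)}(p)$ more transparent, which is convenient for the subsequent induction in the proof of Theorem~\ref{main}. Both proofs ultimately rest on the same evaluation $w_4\equiv\frac{4}{5}pB_{p-5}\pmod{p^2}$ from Lemma~\ref{zhouxialem}.
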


\begin{proof}
Let ${{u}_{i}}={{l}_{1}}+\cdots +{{l}_{i}},1\le i\le 4.$ We have
\begin{equation}\label{start}
\begin{split}
   \sum\limits_{\begin{smallmatrix}
 {{l}_{1}}+\cdots +{{l}_{5}} =2p \\
 {{l}_{_{1}}},\cdots ,{{l}_{5}}\in {\mathcal{P}_{p}}
\end{smallmatrix}}{\frac{1}{{{l}_{1}}{{l}_{2}}{{l}_{3}}{{l}_{4}}{{l}_{5}}}} & =\frac{1}{2p}\sum\limits_{\begin{smallmatrix}
 {{l}_{1}}+\cdots +{{l}_{5}}=2p \\
 {{l}_{1}},\cdots ,{{l}_{5}}\in {\mathcal{P}_{p}}
\end{smallmatrix}}{\frac{{{l}_{1}}+{{l}_{2}}+\cdots +{{l}_{5}}}{{{l}_{1}}{{l}_{2}}\cdots {{l}_{5}}}} \\
& =\frac{5}{2p}\sum\limits_{\begin{smallmatrix}
 {{l}_{1}}+\cdots +{{l}_{4}}<2p \\
 {{l}_{1}},\cdots ,{{l}_{4}},{{u}_{4}}\in {\mathcal{P}_{p}}
\end{smallmatrix}}{\frac{1}{{{l}_{1}}{{l}_{2}}{{l}_{3}}{{l}_{4}}}} \\
 & =\frac{5\cdot 4}{2p}\sum\limits_{\begin{smallmatrix}
 {{l}_{1}},{{l}_{2}},{{l}_{3}}<{{u}_{4}}<2p \\
 {{l}_{1}},{l}_{2} ,{{l}_{3}},{{u}_{4}},{{u}_{4}}-{{u}_{3}}\in {\mathcal{P}_{p}}
\end{smallmatrix}}{\frac{1}{{{l}_{1}}{{l}_{2}}{{l}_{3}}u_{4}}}=\cdots  \\
 & =\frac{5!}{2p}\sum\limits_{\begin{smallmatrix}
 1\le {{u}_{1}}<\cdots <{{u}_{4}}<2p \\
 {{u}_{1}},{{u}_{2}}-{{u}_{1}},\cdots ,{{u}_{4}}{{-}{u}_{3}},{{u}_{4}}\in {\mathcal{P}_{p}}
\end{smallmatrix}}{\frac{1}{{{u}_{1}}{{u}_{2}}{{u}_{3}}{{u}_{4}}}}. \\
\end{split}
\end{equation}
Moreover,
\begin{equation}\label{reason}
\begin{split}
   \sum\limits_{\begin{smallmatrix}
 1\le {{u}_{1}}<\cdots <{{u}_{4}}<2p \\
 {{u}_{1}},{{u}_{2}}-{{u}_{1}},{u}_{3}-{u}_{2} ,{{u}_{4}}-{{u}_{3}},{{u}_{4}}\in {\mathcal{P}_{p}}
\end{smallmatrix}}{\frac{1}{{{u}_{1}}{{u}_{2}}{{u}_{3}}{{u}_{4}}}}
 & =\sum\limits_{\begin{smallmatrix}
 1\le {{u}_{1}}<\cdots <{{u}_{4}}<2p \\
 {{u}_{1}},{{u}_{2}},{u}_{3} ,{{u}_{4}}\in {\mathcal{P}_{p}}
 \\
 {{u}_{2}}-{{u}_{1}},{u}_{3}-{u}_{2} ,{{u}_{4}}-{{u}_{3}}\in {\mathcal{P}_{p}}
\end{smallmatrix}}{\frac{1}{{{u}_{1}}{{u}_{2}}{{u}_{3}}{{u}_{4}}}}   \\
 & \quad +\sum\limits_{\begin{smallmatrix}
 1\le {{u}_{1}}<\cdots <{{u}_{4}}<2p,{{u}_{2}}=p \\
 {{u}_{1}},{{u}_{2}}-{{u}_{1}},{u}_{3}-{u}_{2},{{u}_{4}}-{{u}_{3}},{{u}_{4}}\in {\mathcal{P}_{p}}
\end{smallmatrix}}{\frac{1}{{{u}_{1}}{{u}_{2}}{{u}_{3}}{{u}_{4}}}} \\
&\quad +\sum\limits_{\begin{smallmatrix}
 1\le {{u}_{1}}<\cdots <{{u}_{4}}<2p,{{u}_{3}}=p \\
 {{u}_{1}},{{u}_{2}}-{{u}_{1}},{u}_{3}-{u}_{2},{{u}_{4}}-{{u}_{3}},{{u}_{4}}\in {\mathcal{P}_{p}}
\end{smallmatrix}}{\frac{1}{{{u}_{1}}{{u}_{2}}{{u}_{3}}{{u}_{4}}}}. \\
\end{split}
\end{equation}
It is not hard to see
\begin{displaymath}
\begin{split}
  \sum\limits_{\begin{smallmatrix}
 1\le {{u}_{1}}<\cdots <{{u}_{4}}<2p,{{u}_{2}}=p \\
 {{u}_{1}},{{u}_{2}}-{{u}_{1}},{u}_{3}-{u}_{2} ,{{u}_{4}}-{{u}_{3}},{{u}_{4}}\in {\mathcal{P}_{p}}
\end{smallmatrix}}{\frac{1}{{{u}_{1}}{{u}_{2}}{{u}_{3}}{{u}_{4}}}} &=\frac{1}{p}\sum\limits_{1\le {{u}_{1}}<p}{\frac{1}{{{u}_{1}}}}\sum\limits_{\begin{smallmatrix}
 p<{{u}_{3}}<{{u}_{4}}<2p \\
 {{u}_{3}},{{u}_{4}}-{{u}_{3}},{{u}_{4}}\in {\mathcal{P}_{p}}
\end{smallmatrix}}{\frac{1}{{{u}_{3}}{{u}_{4}}}} \\
 & =\frac{1}{2p}\Big(\sum\limits_{1\le {{u}_{1}}<p}{\frac{1}{{{u}_{1}}}}\Big)\bigg({{\Big(\sum\limits_{p < {{u}_{3}}<2p}{\frac{1}{{{u}_{3}}}}\Big)}^{2}}-\sum\limits_{p < {{u}_{3}}<2p}{\frac{1}{{{u}_{3}^{2}}}}\bigg)\\
 & \equiv 0  \pmod{{{p}^{2}}} .
\end{split}
\end{displaymath}
Here in the last congruence we use Corollary  \ref{lemcor} and Corollary \ref{lemcor2}.

In the same way, we have
\begin{displaymath}
\begin{split}
\sum\limits_{\begin{smallmatrix}
 1\le {{u}_{1}}<\cdots <{{u}_{4}}<2p,{{u}_{3}}=p \\
 {{u}_{1}},{{u}_{2}}-{{u}_{1}},{u}_{3}-{u}_{2},{{u}_{4}}-{{u}_{3}},{{u}_{4}}\in {\mathcal{P}_{p}}
\end{smallmatrix}}{\frac{1}{{{u}_{1}}{{u}_{2}}{{u}_{3}}{{u}_{4}}}}& =\frac{1}{p}\sum\limits_{\begin{smallmatrix}
 1\le {{u}_{1}}<{{u}_{2}}<p \\
 {{u}_{1}},{{u}_{2}}-{{u}_{1}},{{u}_{2}}\in {\mathcal{P}_{p}}
\end{smallmatrix}}{\frac{1}{{{u}_{1}}{{u}_{2}}}}\sum\limits_{p<{{u}_{4}}<2p}{\frac{1}{{{u}_{4}}}} \\
& =\frac{1}{p}\bigg({{\Big(\sum\limits_{1\le {{u}_{1}}<p}{\frac{1}{{{u}_{1}}}}\Big)}^{2}}-\sum\limits_{1\le {{u}_{1}}<p}{\frac{1}{u_{1}^{2}}}\Bigg)\Big(\sum\limits_{p<{{u}_{4}}<2p}{\frac{1}{{{u}_{4}}}}\Big) \\
&\equiv 0 \pmod{{{p}^{2}}}.
\end{split}
\end{displaymath}
Hence from (\ref{reason}) we deduce that
\begin{equation}\label{midterm}
\begin{split}
\sum\limits_{\begin{smallmatrix}
 1\le {{u}_{1}}<\cdots <{{u}_{4}}<2p \\
 {{u}_{1}},{{u}_{2}}-{{u}_{1}},{u}_{3}-{u}_{2},{{u}_{4}}-{{u}_{3}},{{u}_{4}}\in {\mathcal{P}_{p}}
\end{smallmatrix}}{\frac{1}{{{u}_{1}}{{u}_{2}}{{u}_{3}}{{u}_{4}}}} &\equiv \sum\limits_{\begin{smallmatrix}
 1\le {{u}_{1}}<\cdots <{{u}_{4}}<2p \\
 {{u}_{1}},{{u}_{2}},{u}_{3} ,{{u}_{4}}\in {\mathcal{P}_{p}}
 \\
 {{u}_{2}}-{{u}_{1}},{u}_{3}-{u}_{2} ,{{u}_{4}}-{{u}_{3}}\in {\mathcal{P}_{p}}
\end{smallmatrix}}{\frac{1}{{{u}_{1}}{{u}_{2}}{{u}_{3}}{{u}_{4}}}} \\
& \equiv \sum\limits_{\begin{smallmatrix}
 1\le {{u}_{1}}<\cdots <{{u}_{4}}<2p \\
 {{u}_{1}},{{u}_{2}},{u}_{3},{{u}_{4}}\in {\mathcal{P}_{p}}
\end{smallmatrix}}{\frac{1}{{{u}_{1}}{{u}_{2}}{{u}_{3}}{{u}_{4}}}}-{{T}_{1}}-{{T}_{2}}-{{T}_{3}} \pmod{{{p}^{2}}},
\end{split}
\end{equation}
where
\begin{displaymath}
\begin{split}
{{T}_{1}}&=\sum\limits_{\begin{smallmatrix}
 {{u}_{1}}<{{u}_{1}}+p<{{u}_{3}}<{{u}_{4}}<2p \\
 {{u}_{1}},{{u}_{3}},{{u}_{4}}\in {\mathcal{P}_{p}}
\end{smallmatrix}}{\frac{1}{{{u}_{1}}({{u}_{1}}+p){{u}_{3}}{{u}_{4}}}}, \quad {{T}_{2}}=\sum\limits_{\begin{smallmatrix}
 {{u}_{1}}<{{u}_{2}}<{{u}_{2}}+p<{{u}_{4}}<2p \\
 {{u}_{1}},{{u}_{2}},{{u}_{4}}\in {\mathcal{P}_{p}}
\end{smallmatrix}}{\frac{1}{{{u}_{1}}{{u}_{2}}({{u}_{2}}+p){{u}_{4}}}}, \\
& {{T}_{3}}=\sum\limits_{\begin{smallmatrix}
 {{u}_{1}}<{{u}_{2}}<{{u}_{3}}<{{u}_{3}}+p<2p \\
 {{u}_{1}},{{u}_{2}},{{u}_{3}}\in {\mathcal{P}_{p}}
\end{smallmatrix}}{\frac{1}{{{u}_{1}}{u}_{2}{{u}_{3}}{({u}_{3}+p)}}}.
\end{split}
\end{displaymath}

We have
\begin{displaymath}
\begin{split}
   {{T}_{1}} &=\sum\limits_{\begin{smallmatrix}
 {{u}_{1}}<{{u}_{1}}+p<{{u}_{3}}<{{u}_{4}}<2p \\
 {{u}_{1}},{{u}_{3}},{{u}_{4}}\in {\mathcal{P}_{p}}
\end{smallmatrix}}{\frac{1}{{{u}_{1}}({{u}_{1}}+p){{u}_{3}}{{u}_{4}}}}  \quad \hbox{(replace ${u}_{3}$ by ${u}_{3}+p$ and ${u}_{4}$ by ${u}_{4}+p$)} \\
 & =\sum\limits_{\begin{smallmatrix}
 {{u}_{1}}<{{u}_{3}}<{{u}_{4}}<p \\
 {{u}_{1}},{{u}_{3}},{{u}_{4}}\in {\mathcal{P}_{p}}
\end{smallmatrix}}{\frac{1}{{{u}_{1}}({{u}_{1}}+p)({{u}_{3}}+p)({{u}_{4}}+p)}} \\
 & \equiv \sum\limits_{\begin{smallmatrix}
 {{u}_{1}}<{{u}_{3}}<{{u}_{4}}<p \\
 {{u}_{1}},{{u}_{3}},{{u}_{4}}\in {\mathcal{P}_{p}}
\end{smallmatrix}}{\frac{{{u}_{1}}{{u}_{3}}{{u}_{4}}-p({{u}_{1}}{{u}_{3}}+{{u}_{3}}{{u}_{4}}+{{u}_{4}}{{u}_{1}})}{{{u}_{1}}u_{1}^{2}u_{3}^{2}u_{4}^{2}}} \\
 & \equiv \sum\limits_{\begin{smallmatrix}
 {{u}_{1}}<{{u}_{3}}<{{u}_{4}}<p \\
 {{u}_{1}},{{u}_{3}},{{u}_{4}}\in {\mathcal{P}_{p}}
\end{smallmatrix}}{\frac{1}{u_{1}^{2}{{u}_{3}}{{u}_{4}}}}-p\sum\limits_{\begin{smallmatrix}
 {{u}_{1}}<{{u}_{3}}<{{u}_{4}}<p \\
 {{u}_{1}},{{u}_{3}},{{u}_{4}}\in {\mathcal{P}_{p}}
\end{smallmatrix}}{\frac{1}{u_{1}^{2}{{u}_{3}}u_{4}^{2}}}-p\sum\limits_{\begin{smallmatrix}
 {{u}_{1}}<{{u}_{3}}<{{u}_{4}}<p \\
 {{u}_{1}},{{u}_{3}},{{u}_{4}}\in {\mathcal{P}_{p}}
\end{smallmatrix}}{\frac{1}{u_{1}^{3}{{u}_{3}}{{u}_{4}}}} \\
& \quad -p\sum\limits_{\begin{smallmatrix}
 {{u}_{1}}<{{u}_{3}}<{{u}_{4}}<p \\
 {{u}_{1}},{{u}_{3}},{{u}_{4}}\in {\mathcal{P}_{p}}
\end{smallmatrix}}{\frac{1}{u_{1}^{2}u_{3}^{2}{{u}_{4}}}}\\
& \equiv \sum\limits_{\begin{smallmatrix}
 {{u}_{1}}<{{u}_{2}}<{{u}_{3}}<p \\
 {{u}_{1}},{{u}_{2}},{{u}_{3}}\in {\mathcal{P}_{p}}
\end{smallmatrix}}{\frac{1}{u_{1}^{2}{{u}_{2}}{{u}_{3}}}}-p\sum\limits_{\begin{smallmatrix}
 {{u}_{1}}<{{u}_{2}}<{{u}_{3}}<p \\
 {{u}_{1}},{{u}_{2}},{{u}_{3}}\in {\mathcal{P}_{p}}
\end{smallmatrix}}{\frac{1}{u_{1}^{2}{{u}_{2}}u_{3}^{2}}}-p\sum\limits_{\begin{smallmatrix}
 {{u}_{1}}<{{u}_{2}}<{{u}_{3}}<p \\
 {{u}_{1}},{{u}_{2}},{{u}_{3}}\in {\mathcal{P}_{p}}
\end{smallmatrix}}{\frac{1}{u_{1}^{3}{{u}_{2}}{{u}_{3}}}} \\
& \quad -p\sum\limits_{\begin{smallmatrix}
 {{u}_{1}}<{{u}_{2}}<{{u}_{3}}<p \\
 {{u}_{1}},{{u}_{2}},{{u}_{3}}\in {\mathcal{P}_{p}}
\end{smallmatrix}}{\frac{1}{u_{1}^{2}u_{2}^{2}{{u}_{3}}}} \pmod{{{p}^{2}}}.
\end{split}
\end{displaymath}
Here in the last congruence we just replace the variables $u_{3}$ by $u_{2}$ and $u_{4}$ by $u_{3}$.

Similarly,
\begin{displaymath}
\begin{split}
   {{T}_{2}}&=\sum\limits_{{{u}_{1}}<{{u}_{2}}<{{u}_{2}}+p<{{u}_{4}}<2p}{\frac{1}{{{u}_{1}}{{u}_{2}}({{u}_{2}}+p){{u}_{4}}}} \quad \hbox{(replace $u_{4}$ by $u_{3}+p$)} \\
 & =\sum\limits_{{{u}_{1}}<{{u}_{2}}<{{u}_{3}}<p}{\frac{1}{{{u}_{1}}{{u}_{2}}({{u}_{2}}+p)({{u}_{3}}+p)}} \\
 & \equiv \sum\limits_{{{u}_{1}}<{{u}_{2}}<{{u}_{3}}<p}{\frac{{{u}_{2}}{{u}_{3}}-p({{u}_{2}}+{{u}_{3}})}{{{u}_{1}}{{u}_{2}}u_{2}^{2}u_{3}^{2}}} \\
 & \equiv \sum\limits_{{{u}_{1}}<{{u}_{2}}<{{u}_{3}}<p}{\frac{1}{{{u}_{1}}u_{2}^{2}{{u}_{3}}}}-p\sum\limits_{{{u}_{1}}<{{u}_{2}}<{{u}_{3}}<p}{\frac{1}{{{u}_{1}}u_{2}^{2}u_{3}^{2}}}-p\sum\limits_{{{u}_{1}}<{{u}_{2}}<{{u}_{3}}<p}{\frac{1}{{{u}_{1}}u_{2}^{3}{{u}_{3}}}} \pmod{p^2}
\end{split}
\end{displaymath}
and
\begin{displaymath}
\begin{split}
  {{T}_{3}} &=\sum\limits_{{{u}_{1}}<{{u}_{2}}<{{u}_{3}}<p}{\frac{1}{{{u}_{1}}{{u}_{2}}{{u}_{3}}({{u}_{3}}+p)}} \\
 & \equiv \sum\limits_{{{u}_{1}}<{{u}_{2}}<{{u}_{3}}<p}{\frac{{{u}_{3}}-p}{{{u}_{1}}{{u}_{2}}u_{3}^{3}}} \\
 & \equiv \sum\limits_{{{u}_{1}}<{{u}_{2}}<{{u}_{3}}<p}{\frac{1}{{{u}_{1}}{{u}_{2}}u_{3}^{2}}}-p\sum\limits_{{{u}_{1}}<{{u}_{2}}<{{u}_{3}}<p}{\frac{1}{{{u}_{1}}{{u}_{2}}u_{3}^{3}}} \pmod{p^2}. \\
\end{split}
\end{displaymath}

Hence by Lemma \ref{zhouxialem}, we have
\begin{displaymath}
\begin{split}
{{T}_{1}}+{{T}_{2}}+{{T}_{3}}& =\sum\limits_{{{u}_{1}}<{{u}_{2}}<{{u}_{3}}<p}{\Big(\frac{1}{u_{1}^{2}{{u}_{2}}{{u}_{3}}}+\frac{1}{{{u}_{1}}u_{2}^{2}{{u}_{3}}}+\frac{1}{{{u}_{1}}{{u}_{2}}u_{3}^{3}}\Big)} \\
 & \quad -p\sum\limits_{{{u}_{1}}<{{u}_{2}}<{{u}_{3}}<p}{\Big(\frac{1}{u_{1}^{2}{{u}_{2}}u_{3}^{2}}+\frac{1}{u_{1}^{2}u_{2}^{2}{{u}_{3}}}+\frac{1}{{{u}_{1}}u_{2}^{2}u_{3}^{2}}\Big)} \\
 & \quad -p\sum\limits_{{{u}_{1}}<{{u}_{2}}<{{u}_{3}}<p}{\Big(\frac{1}{u_{1}^{3}{{u}_{2}}{{u}_{3}}}+\frac{1}{{{u}_{1}}u_{2}^{3}{{u}_{3}}}+\frac{1}{{{u}_{1}}{{u}_{2}}u_{3}^{3}}\Big)} \\
&\equiv \frac{1}{2}\Bigg(\sum\limits_{\begin{smallmatrix}
 {{u}_{1}},{{u}_{2}},{{u}_{3}}<p \\
 {{u}_{i}}\ne {{u}_{j}}
\end{smallmatrix}}{\frac{1}{u_{1}^{2}{{u}_{2}}{{u}_{3}}}}-p\sum\limits_{\begin{smallmatrix}
 {{u}_{1}},{{u}_{2}},{{u}_{3}}<p \\
 {{u}_{i}}\ne {{u}_{j}}
\end{smallmatrix}}{\frac{1}{u_{1}^{2}{{u}_{2}}u_{3}^{2}}}-p\sum\limits_{\begin{smallmatrix}
 {{u}_{1}},{{u}_{2}},{{u}_{3}}<p \\
 {{u}_{i}}\ne {{u}_{j}}
\end{smallmatrix}}{\frac{1}{u_{1}^{3}{{u}_{2}}{{u}_{3}}}}\Bigg) \\
& \equiv \frac{4}{5}p{{B}_{p-5}} \pmod{{{p}^{2}}}.
\end{split}
\end{displaymath}
Substituting this congruence into (\ref{midterm}) and combining with (\ref{start}),  we obtain
\begin{displaymath}
\sum\limits_{\begin{smallmatrix}
 {{l}_{1}}+\cdots +{{l}_{5}}=2p \\
 {{l}_{_{1}}},\cdots ,{{l}_{5}}\in {\mathcal{P}_{p}}
\end{smallmatrix}}{\frac{1}{{{l}_{1}}{{l}_{2}}{{l}_{3}}{{l}_{4}}{{l}_{5}}}}\equiv \frac{5!}{2p}\Big(\sum\limits_{\begin{smallmatrix}
 1\le {{u}_{1}}<\cdots <{{u}_{4}}<2p \\
 {{u}_{1}},{{u}_{2}},{u}_{3} ,{{u}_{4}}\in {\mathcal{P}_{p}}
\end{smallmatrix}}{\frac{1}{{{u}_{1}}{{u}_{2}}{{u}_{3}}{{u}_{4}}}}-\frac{4}{5}p{{B}_{p-5}}\Big) \pmod{p}.
\end{displaymath}
By Lemma \ref{2plem}, we have
\[\sum\limits_{\begin{smallmatrix}
 1\le {{u}_{1}}<\cdots <{{u}_{4}}<2p \\
 {{u}_{1}},{{u}_{2}},{u}_{3} ,{{u}_{4}}\in {\mathcal{P}_{p}}
\end{smallmatrix}}{\frac{1}{{{u}_{1}}{{u}_{2}}{{u}_{3}}{{u}_{4}}}} = \frac{1}{4!} \sum\limits_{\begin{smallmatrix}
 1\le {{u}_{1}},\cdots ,{{u}_{4}}<2p \\
 {{u}_{i}}\ne {{u}_{j}},{{u}_{i}}\in {\mathcal{P}_{p}}
\end{smallmatrix}}{\frac{1}{{{u}_{1}}{{u}_{2}}{{u}_{3}}{{u}_{4}}}}\equiv -\frac{2}{5}p{{B}_{p-5}} \pmod{{{p}^{2}}}.\]
Hence
\[\sum\limits_{\begin{smallmatrix}
 {{l}_{1}}+\cdots +{{l}_{5}}=2p \\
 {{l}_{_{1}}},\cdots ,{{l}_{5}}\in {\mathcal{P}_{p}}
\end{smallmatrix}}{\frac{1}{{{l}_{1}}{{l}_{2}}{{l}_{3}}{{l}_{4}}{{l}_{5}}}}\equiv  -3\cdot 4!{{B}_{p-5}} \pmod{p}.\]

We observe that
\begin{displaymath}
\begin{split}
\sum\limits_{\begin{smallmatrix}
 {{l}_{1}}+{{l}_{2}}+\cdots +{{l}_{5}}=2p \\
 {{l}_{1}},\cdots ,{{l}_{5}}\in {\mathcal{P}_{p}}
\end{smallmatrix}}{\frac{1}{{{l}_{1}}{{l}_{2}}{{l}_{3}}{{l}_{4}}{{l}_{5}}}}&=\sum\limits_{\begin{smallmatrix}
 {{l}_{1}}+{{l}_{2}}+\cdots +{{l}_{5}}=2p \\
 {{l}_{1}},\cdots ,{{l}_{5}}<p
\end{smallmatrix}}{\frac{1}{{{l}_{1}}{{l}_{2}}{{l}_{3}}{{l}_{4}}{{l}_{5}}}}+5\sum\limits_{\begin{smallmatrix}
 {{l}_{1}}+{{l}_{2}}+\cdots +{{l}_{5}}=p \\
 {{l}_{1}}, \cdots ,{{l}_{5}}<p
\end{smallmatrix}}{\frac{1}{({{l}_{1}}+p){{l}_{2}}{{l}_{3}}{{l}_{4}}{{l}_{5}}}} \\
& \equiv S_{5}^{(2)}(p)+5S_{5}^{(1)}(p) \pmod{p}.
\end{split}
\end{displaymath}
 By (\ref{zhou}), we have $S_{5}^{(1)}(p)\equiv -4!{{B}_{p-5}}$ (mod $p$). Hence we deduce that
\[S_{5}^{(2)}(p)\equiv \sum\limits_{\begin{smallmatrix}
 {{l}_{1}}+{{l}_{2}}+\cdots +{{l}_{5}}=2p \\
 {{l}_{1}},\cdots ,{{l}_{5}}\in {\mathcal{P}_{p}}
\end{smallmatrix}}{\frac{1}{{{l}_{1}}{{l}_{2}}{{l}_{3}}{{l}_{4}}{{l}_{5}}}}-5S_{5}^{(1)}(p)\equiv 2\cdot 4!{{B}_{p-5}} \pmod {p}.\]
This completes the proof.
\end{proof}


\section{Proofs of the Theorems}

\begin{proof}[Proof of Theorem \ref{main}]
For any 5-tuples $({{l}_{1}},\cdots ,{{l}_{5}})$ of integers satisfying ${{l}_{1}}+\cdots +{{l}_{5}}=2{{p}^{r+1}}$, $1 \le l_{i} <p^{r+1}$, $l_{i} \in \mathcal{P}_{p}$, $1 \le i \le 5$, we rewrite them as
\[{{l}_{i}}={{x}_{i}}{{p}^{r}}+{{y}_{i}},\quad 0\le {{x}_{i}}<p,\quad 1\le {{y}_{i}}<{{p}^{r}}, \quad {{y}_{i}}\in {\mathcal{P}_{p}}, \quad 1 \le i \le 5.\]
Since
\[\big(\sum\limits_{i=1}^{5}{{{x}_{i}}}\big){{p}^{r}}+\sum\limits_{i=1}^{5}{{{y}_{i}}}=2{{p}^{r+1}},\]
we know there exists $1\le a \le 4$ such that
\[\left\{ \begin{array}{ll}
  {{x}_{1}}+\cdots +{{x}_{5}}=2p-a \\
 {{y}_{1}}+\cdots +{{y}_{5}}=ap^{r} \\
\end{array} \right..\]
By (ii) of Lemma \ref{Casolution}, we have $C_{a} \equiv 0$ (mod $p$) for $1\le a \le 4$. Hence
\begin{displaymath}
\begin{split}
   S_{5}^{(2)}({{p}^{r+1}}) & =\sum\limits_{\begin{smallmatrix}
 {{l}_{1}}+\cdots +{{l}_{5}}=2{{p}^{r+1}} \\
 {{l}_{i}}\in {\mathcal{P}_{p}},{{l}_{i}}<{{p}^{r+1}}
\end{smallmatrix}}{\frac{1}{{{l}_{1}}{{l}_{2}}\cdots {{l}_{5}}}} \\
 & =\sum\limits_{a=1}^{4}{\sum\limits_{\begin{smallmatrix}
 {{x}_{1}}+\cdots +{{x}_{5}}=2p-a \\
 0\le {{x}_{i}}<p
\end{smallmatrix}}{\sum\limits_{\begin{smallmatrix}
 {{y}_{1}}+\cdots +{{y}_{5}}=a{{p}^{r}} \\
 {{y}_{i}}\in {\mathcal{P}_{p}},{{y}_{i}}<{{p}^{r}}
\end{smallmatrix}}{\frac{1}{({{x}_{1}}{{p}^{r}}+{{y}_{1}})\cdots ({{x}_{5}}{{p}^{r}}+{{y}_{5}})}}}} \\
 & \equiv {{C}_{1}}S_{5}^{(1)}({{p}^{r}})+{{C}_{2}}S_{5}^{(2)}({{p}^{r}})+{{C}_{3}}S_{5}^{(3)}({{p}^{r}})+{{C}_{4}}S_{5}^{(4)}({{p}^{r}}) \pmod{p^{r+1}}.
\end{split}
\end{displaymath}

By (i) of Lemma \ref{rec}, we have $S_{5}^{(3)}(p^{r})\equiv -S_{5}^{(2)}(p^{r})$ (mod $p^{r}$) and $S_{5}^{(4)}(p^{r}) \equiv -S_{5}^{(1)}(p^{r})$ (mod $p^{r}$).
Hence
\begin{equation}\label{add1}
   S_{5}^{(2)}({{p}^{r+1}})  \equiv ({{C}_{1}}-{{C}_{4}})S_{5}^{(1)}({{p}^{r}})+({{C}_{2}}-{{C}_{3}})S_{5}^{(2)}({{p}^{r}}) \pmod {p^{r+1}}.
\end{equation}

By (ii) of Lemma \ref{rec} we have
\begin{equation}\label{add2}
\begin{split}
   S_{5}^{(1)}({{p}^{r+1}}) &\equiv \sum\limits_{k=1}^{4}{S_{5}^{(k)}({{p}^{r}})\binom {p-k+4}{4}} \\
 & \equiv \bigg(\binom {p+3}{4}-\binom {p}{4} \bigg)S_{5}^{(1)}({{p}^{r}})+   \bigg(\binom {p+2}{4}-\binom {p+1}{4} \bigg)S_{5}^{(2)}({{p}^{r}}) \\
 & =\frac{p({{p}^{2}}+1)}{2}S_{5}^{(1)}({{p}^{r}})+\frac{p({{p}^{2}}-1)}{6}S_{5}^{(2)}({{p}^{r}}) \pmod{{{p}^{r+1}}}.
\end{split}
\end{equation}
From (\ref{add1}) and (\ref{add2}), by induction on $r$ we can prove $S_{5}^{(1)}({{p}^{r}})\equiv S_{5}^{(2)}({{p}^{r}})\equiv 0$ (mod ${{p}^{r-1}}$). Combining this with (\ref{add1}) and (\ref{add2}), by Lemma \ref{Casolution} we deduce that
\begin{equation}\label{S512rec}
\begin{split}
&S_{5}^{(1)}({{p}^{r+1}})\equiv \frac{1}{2}pS_{5}^{(1)}({{p}^{r}})-\frac{1}{6}pS_{5}^{(2)}({{p}^{r}}) \pmod{{{p}^{r+1}}},\\
&S_{5}^{(2)}({{p}^{r+1}}) \equiv -\frac{3}{2}pS_{5}^{(1)}({{p}^{r}})+\frac{1}{2}pS_{5}^{(2)}({{p}^{r}}) \pmod {{{p}^{r+1}}}.
\end{split}
\end{equation}
From (\ref{S512rec}) we have
\begin{equation}\label{s21}
S_{5}^{(2)}({{p}^{r+1}})\equiv -3S_{5}^{(1)}({{p}^{r+1}}) \pmod{{{p}^{r+1}}}, \quad r\ge 1.
\end{equation}
Substituting this result into the first congruence of (\ref{S512rec}), we obtain
\begin{equation}\label{S5rec}
S_{5}^{(1)}({{p}^{r+1}})\equiv pS_{5}^{(1)}({{p}^{r}}) \pmod{{{p}^{r+1}}}, \quad  r\ge 2.
\end{equation}

By (\ref{zhou}), we have $S_{5}^{(1)}(p)\equiv -4!{{B}_{p-5}}$ (mod $p$). By Lemma \ref{S52modp} we have $S_{5}^{(2)}(p)\equiv 2\cdot 4!{{B}_{p-5}}$ (mod $p$). Hence from (\ref{S512rec}) we deduce that
\[S_{5}^{(1)}({{p}^{2}})\equiv \frac{1}{2}pS_{5}^{(1)}(p)-\frac{1}{6}pS_{5}^{(2)}(p)\equiv -\frac{5!}{6}p{{B}_{p-5}} \pmod{{{p}^{2}}}.\]
Now using (\ref{S5rec}) and by induction on $r$ we prove $S_{5}^{(1)}({{p}^{r}})\equiv -\frac{5!}{6}{{p}^{r-1}}{{B}_{p-5}}$ (mod ${{p}^{r}}$) for any prime $p>5$ and integer $r \ge 2$.
\end{proof}

\begin{proof}[Proof of Theorem \ref{thm2}]
Let $n=mp^{r}$, where $p$ does not divide $m$.
For any 5-tuples $({{l}_{1}},\cdots ,{{l}_{5}})$ of integers satisfying ${{l}_{1}}+\cdots +{{l}_{5}}=n$, $l_{i} \in \mathcal{P}_{p}$, $1 \le i \le 5$, we rewrite them as
\[{{l}_{i}}={{x}_{i}}{{p}^{r}}+{{y}_{i}}, \quad x_{i} \ge 0, \quad 1\le {{y}_{i}}<{{p}^{r}}, \quad {{y}_{i}}\in {\mathcal{P}_{p}}, \quad 1 \le i \le 5.\]
Since
\[\big(\sum\limits_{i=1}^{5}{{{x}_{i}}}\big){{p}^{r}}+\sum\limits_{i=1}^{5}{{{y}_{i}}}=m{{p}^{r}},\]
we know there exists $1\le a \le 4$ such that
\[\left\{ \begin{array}{ll}
  {{x}_{1}}+\cdots +{{x}_{5}}=m-a \\
 {{y}_{1}}+\cdots +{{y}_{5}}=ap^{r} \\
\end{array} \right..\]
For $1 \le a \le 4$, the equation $x_{1}+x_{2}+x_{3}+x_{4}+x_{5}=m-a$ has $\binom {m-a+4}{4}$ solutions $(x_{1},x_{2},x_{3},x_{4},x_{5})$ of nonnegative integers. Hence
\begin{equation}\label{thm2rec}
\begin{split}
  \sum\limits_{\begin{smallmatrix}
 {{l}_{1}}+{{l}_{2}}+\cdots +{{l}_{5}}=n \\
 {{l}_{1}},\cdots ,{{l}_{5}}\in {\mathcal{P}_{p}}
\end{smallmatrix}}{\frac{1}{{{l}_{1}}{{l}_{2}}{{l}_{3}}{{l}_{4}}{{l}_{5}}}}  & =\sum\limits_{\begin{smallmatrix}
 {{l}_{1}}+\cdots +{{l}_{5}}=m{{p}^{r}} \\
  {{l}_{1}},\cdots ,{{l}_{5}}\in {\mathcal{P}_{p}}
\end{smallmatrix}}{\frac{1}{{{l}_{1}}{{l}_{2}}\cdots {{l}_{5}}}} \\
 & =\sum\limits_{a=1}^{4}{\sum\limits_{\begin{smallmatrix}
 {{x}_{1}}+\cdots +{{x}_{5}}=m-a \\
 0\le {{x}_{i}}<p
\end{smallmatrix}}{\sum\limits_{\begin{smallmatrix}
 {{y}_{1}}+\cdots +{{y}_{5}}=a{{p}^{r}} \\
 {{y}_{i}}\in {\mathcal{P}_{p}},{{y}_{i}}<{{p}^{r}}
\end{smallmatrix}}{\frac{1}{({{x}_{1}}{{p}^{r}}+{{y}_{1}})\cdots ({{x}_{5}}{{p}^{r}}+{{y}_{5}})}}}} \\
 & \equiv \binom{m+3}{4}S_{5}^{(1)}({{p}^{r}})+\binom{m+2}{4}S_{5}^{(2)}({{p}^{r}}) \\
 & \quad +\binom{m+1}{4}S_{5}^{(3)}({{p}^{r}})+\binom{m}{4}S_{5}^{(4)}({{p}^{r}}) \pmod{p^{r}}.
\end{split}
\end{equation}
According to $r=1$ or $r \ge 2$, we split our proof into two cases.

(i) If $r=1$, then from (\ref{zhou}) we know $S_{5}^{(1)}(p)\equiv -4!B_{p-5}$ (mod $p$). By Lemma \ref{S52modp}, we deduce that $S_{5}^{(2)}(p) \equiv -2 S_{5}^{(1)}(p)$ (mod $p$). By (i) of Lemma \ref{rec}, we know $S_{5}^{(3)}(p) \equiv 2S_{5}^{(1)}(p)$ (mod $p$) and $S_{5}^{(4)}(p) \equiv -S_{5}^{(1)}(p)$ (mod $p$). Hence from (\ref{thm2rec}) we have
\begin{displaymath}
\begin{split}
 \sum\limits_{\begin{smallmatrix}
 {{l}_{1}}+{{l}_{2}}+\cdots +{{l}_{5}}=n \\
 {{l}_{1}},\cdots ,{{l}_{5}}\in {\mathcal{P}_{p}}
\end{smallmatrix}}{\frac{1}{{{l}_{1}}{{l}_{2}}{{l}_{3}}{{l}_{4}}{{l}_{5}}}}  & \equiv \bigg(\binom{m+3}{4}-2\binom{m+2}{4}+2\binom{m+1}{4}-\binom{m}{4}\bigg)S_{5}^{(1)}(p) \\
 & \equiv \frac{1}{6}(5m+m^{3}) S_{5}^{(1)}(p)  \pmod{p}.
 \end{split}
\end{displaymath}
Since $S_{5}^{(1)}(p)\equiv -4!B_{p-5}$ (mod $p$) and $m=\frac{n}{p}$, we complete the proof of (i).

(ii) If $r \ge 2$, then from (\ref{s21}) we deduce that for any integer $r \ge 2$, $S_{5}^{(2)}({{p}^{r}})\equiv -3S_{5}^{(1)}(p^{r})$ (mod ${{p}^{r}}$). By (i) of Lemma \ref{rec}, we obtain
\[ S_{5}^{(3)}({{p}^{r}}) \equiv -S_{5}^{(2)}({{p}^{r}}) \equiv 3S_{5}^{(1)}(p^{r}) \pmod{{{p}^{r}}}, \quad  S_{5}^{(4)}({{p}^{r}})  \equiv -S_{5}^{(1)}({{p}^{r}})  \pmod{{p}^{r}}.\]
Hence from (\ref{thm2rec}) we obtain
\begin{displaymath}
\begin{split}
 \sum\limits_{\begin{smallmatrix}
 {{l}_{1}}+{{l}_{2}}+\cdots +{{l}_{5}}=n \\
 {{l}_{1}},\cdots ,{{l}_{5}}\in {\mathcal{P}_{p}}
\end{smallmatrix}}{\frac{1}{{{l}_{1}}{{l}_{2}}{{l}_{3}}{{l}_{4}}{{l}_{5}}}}  & \equiv \bigg(\binom{m+3}{4}-3\binom{m+2}{4}+3\binom{m+1}{4}-\binom{m}{4}\bigg)S_{5}^{(1)}(p^{r}) \\
 & \equiv m S_{5}^{(1)}(p^{r})  \pmod{p^{r}}.
 \end{split}
\end{displaymath}
By Theorem \ref{main}, we have $S_{5}^{(1)}(p^{r}) \equiv -\frac{5!}{6}p^{r-1}B_{p-5}$ (mod $p^{r}$). This completes the proof of (ii).
\end{proof}
As we mentioned earlier, naturally we can ask the following question: Can we find two arithmetical functions $a(n)$ and $b(n)$ such that \\
(i) for any odd integer $n \ge 7$ and prime $p>n$,
\[\sum\limits_{\begin{smallmatrix}
 {{l}_{1}}+{{l}_{2}}+\cdots +{{l}_{n}}={{p}^{r}} \\
 {{l}_{1}},\cdots ,{{l}_{n}}\in {\mathcal{P}_{p}}
\end{smallmatrix}}{\frac{1}{{{l}_{1}}{{l}_{2}}\cdots {{l}_{n}}}}\equiv a(n){{p}^{r-1}}{{B}_{p-n}} \pmod{{{p}^{r}}};\]
(ii) for any even integer $n \ge 6$ and prime $p>n$,
\[\sum\limits_{\begin{smallmatrix}
 {{l}_{1}}+{{l}_{2}}+\cdots +{{l}_{n}}={{p}^{r}} \\
 {{l}_{1}},\cdots ,{{l}_{n}}\in {\mathcal{P}_{p}}
\end{smallmatrix}}{\frac{1}{{{l}_{1}}{{l}_{2}}\cdots {{l}_{n}}}}\equiv b(n){{p}^{r}}{{B}_{p-n-1}} \pmod {{{p}^{r+1}}}.\]
At this stage, we are not able to answer this question. We believe such $a(n)$ and $b(n)$ exist but to solve this problem one may need to develop some new ideas or methods.



\begin{thebibliography}{1}
\bibitem{Ji}C. Ji, A simple proof of a curious congruence by Zhao, Proc. Amer. Math. Soc. 133 (2005), 3469--3472.
\bibitem{ref3}E. Lehmer, On congruences involving Bernoulli numbers and the quotients of Fermat and Wilson, Ann. Math. 39 (1938), 350--360.
\bibitem{Sun}Z. Sun, Congruence concerning Bernoulli numbers and Bernoulli polynomials, Discrete Appl. Math 105 (2000), 193--223.
\bibitem{Wang}L. Wang and T. Cai, A curious congruence modulo prime powers, J. Number Theory 144 (2014), 15--24.
\bibitem{Xia}B. Xia and T. Cai, Bernoulli numbers and congruence for harmonic sums, Int. J. Number Theory 06 (2010),  849--855.
\bibitem{Zhao1}J. Zhao, Bernoulli numbers, Wolstenholme's theorem, and $p^5$ variations of Lucas' theorem, J. Number Theory 123 (2007), 18--26.
\bibitem{Zhao4}J. Zhao, A super congruence involving multiple harmonic sums, preprint in Arxiv, \url{http://arxiv.org/abs/1404.3549}
\bibitem{zhouxia}X. Zhou and T. Cai, A generalization of a curious congruence on harmonic sums, Proc. Amer. Math. Soc. 135 (2007), 1329--1333.

\end{thebibliography}
\end{document}